\newtheorem{thm}{Theorem}[section]
\newtheorem{prop}[thm]{Proposition}
\newtheorem{lem}[thm]{Lemma}
\newtheorem{cor}[thm]{Corollary}
\theoremstyle{definition}
\newtheorem{exam}[thm]{Example}
\newtheorem{defi}[thm]{Definition}
\newtheorem{rem}[thm]{Remark}
\begin{document}

\author{Morimichi Kawasaki}

\address[Morimichi Kawasaki]{Center for Geometry and Physics, Institute for Basic Science (IBS), Pohang 790-784, Republic of Korea}
\email{kawasaki@ibs.re.kr}
\title{Superheavy Lagrangian immersion in 2-torus}
\maketitle
\begin{abstract}
We show that the union of a meridian and a longitude of the symplectic 2-torus is superheavy in the sense of Entov-Polterovich. By a result of Entov-Polterovich, this implies that the product of this union and the Clifford torus of $\mathbb{C}P^n$ with the Fubini-Study symplectic form cannot be displaced by symplectomorphisms.
\end{abstract}

\section{Introduction}

A diffeomorphism $f$ of a symplectic manifold $(M,\omega)$ is called \textit{symplectomorphism} if $f$ preserves the symplectic form $\omega$.

A subset $U$ is said to be strongly non-displaceable if $f(U)\cap \bar{U}\neq\emptyset$ for any symplectomorphism $f$.

Entov-Polterovich \cite{EP09} investigated strong non-displaceability and defined the superheavy subsets by using the spectral invariants for the pairs of quantum homology class and Hamiltonian functions.

For symplectic manifolds with nontrivial fundamental group, Irie \cite{Ir} studied the spectral invariants for certain Hamiltonian functions which generates Hamiltonian paths with non contractible closed orbits.
By using such Hamiltonian functions, we show the below theorem.

One of examples obtained by using our method is as follows:
\begin{thm}\label{torus}
Let $(\mathbb{T}^2,\omega_{\mathbb{T}^2})=(\mathbb{R}^2/\mathbb{Z}^2,\omega_{\mathbb{T}^2})$ be the 2-torus with the coordinates $(p,q)$ and the symplectic form $dp{\wedge}dq$.
The union $M{\cup}L$ of the meridian curve $M$ and the longitude curve $L$ is a ``$[\mathbb{T}^2]$-superheavy'' subset of $(\mathbb{T}^2,\omega_{\mathbb{T}^2})$.
\end{thm}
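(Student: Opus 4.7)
The objective is the upper bound $\zeta_{[\mathbb{T}^2]}(H) \le \sup_{M \cup L} H$ that defines $[\mathbb{T}^2]$-superheaviness. Subtracting the constant $\sup_{M\cup L}H$ from $H$, the problem reduces to showing $\zeta_{[\mathbb{T}^2]}(H) \le 0$ whenever $H|_{M\cup L} \le 0$. Because $\pi_1(\mathbb{T}^2) = \mathbb{Z}\langle\alpha\rangle \oplus \mathbb{Z}\langle\beta\rangle$ is non-trivial, Irie's formalism supplies a spectral invariant $\zeta_\gamma$ for every free homotopy class $\gamma$ of loops on $\mathbb{T}^2$, together with a triangle inequality
\[
\zeta_{[\mathbb{T}^2]}(H) \;\le\; \zeta_\gamma(H \# K) + \zeta_{-\gamma}(\bar K),
\]
where $\bar K$ generates the inverse flow of $K$. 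The plan is to pick an auxiliary $K$ so that the right-hand side can be driven arbitrarily close to $0$, while using the sign constraint on $H$ along $M\cup L$.

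\textbf{Auxiliary Hamiltonian and estimates.} For a large integer $N$, I would take $K = F_N(p,q) = f_N(p)$, where $f_N \le 0$ is a smooth bump supported in a thin strip $|p| < \delta_N$ around the meridian $M = \{p=0\}$ and with $f_N'(p_0) = N$ at some interior $p_0$ close to $0$. The circle $q \mapsto (p_0,q)$ is then a closed orbit of $\phi^1_{F_N}$ representing the class $N\beta$, and the corresponding orbit of $\bar F_N = -F_N$ represents $-N\beta$. A direct action computation (in a canonical lift to $\mathbb{R}^2$) gives these two orbits opposite actions, and Irie's formula identifies $\zeta_{N\beta}(F_N)$ and $\zeta_{-N\beta}(\bar F_N)$ with those actions, so their sum vanishes. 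When $F_N$ is perturbed to $H\# F_N$, the large $f_N'$ forces an $N\beta$-orbit of $\phi^1_{H\# F_N}$ to remain $C^0$-close to the strip where $F_N$ is supported, hence close to $M$; its action picks up the correction $\int_0^1 H\,dt$ along the orbit, which is $\le \sup_M H + \eta_N \le \eta_N$ because $H \le 0$ on $M$, with $\eta_N \to 0$. Thus
\[
\zeta_{N\beta}(H\# F_N) + \zeta_{-N\beta}(\bar F_N) \;\le\; \eta_N \;\longrightarrow\; 0,
\]
and the triangle inequality delivers $\zeta_{[\mathbb{T}^2]}(H) \le 0$. A symmetric construction $G_N(p,q) = g_N(q)$ localised around $L$ handles the longitude.

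\textbf{Main obstacle.} The hardest step is the second one: proving that the $N\beta$-orbit of $\phi^1_{H\# F_N}$ really does localise near $M$ when $H$ has bounded but not small amplitude, and that Irie's spectral invariants of the autonomous $F_N$ and $\bar F_N$ do cancel exactly, rather than only satisfying the one-sided inequality $\zeta_{N\beta}(F_N)+\zeta_{-N\beta}(\bar F_N) \ge 0$ automatic from $F_N \# \bar F_N = 0$. This demands care with the cap/lift used to define the action functional, and is where the hypothesis $H|_{M\cup L}\le 0$ must genuinely enter through the orbit's confinement to $M$; it is also the stage where sign errors would downgrade the argument from superheaviness to mere heaviness.
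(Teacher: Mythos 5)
Your proposal takes a genuinely different route from the paper, but it has a decisive gap: as structured, it proves too much. You derive $\zeta_{[\mathbb{T}^2]}(H)\le 0$ from the single hypothesis $H|_{M}\le 0$ (the longitude only enters via a ``symmetric construction'' that is never combined with the meridian one). If that derivation were correct, the meridian alone would be $[\mathbb{T}^2]$-superheavy; but then so would its translate $\{p=\tfrac12\}$ (translations are symplectomorphisms isotopic to the identity), giving two disjoint $[\mathbb{T}^2]$-superheavy sets, which is impossible since a superheavy set must meet every heavy set. The meridian is heavy but not superheavy, and the whole point of the theorem is that one needs the union of both curves. Concretely, the step that must fail is the localization claim: there is no reason an orbit of $\phi^1_{H\# F_N}$ in class $N\beta$ stays $C^0$-close to the strip where $F_N$ is supported, nor that the minimax level $\zeta_{N\beta}(H\# F_N)$ is carried by such an orbit; $H$ has unconstrained amplitude off $M\cup L$ and can carry the orbit far from the strip, picking up positive action there.

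A second, independent problem is that the machinery you invoke is not available off the shelf and is not constructed in your argument. Irie's result does not provide spectral invariants $\zeta_\gamma$ for nontrivial free homotopy classes together with a triangle inequality
\[
\zeta_{[\mathbb{T}^2]}(H)\le \zeta_\gamma(H\# K)+\zeta_{-\gamma}(\bar K)
\]
coupling the quantum-homology invariant to noncontractible-class invariants; such an inequality would require a product structure on Floer homology of noncontractible orbits and an identification of the relevant classes, none of which you supply. What Irie actually uses --- and what the paper uses --- is only the negative statement that noncontractible periodic orbits do not contribute to the (contractible-orbit) action spectrum. The paper's proof exploits this as follows: take $\Phi\ge 0$ vanishing exactly on $M\cup L$; every fiber $\Phi^{-1}(\epsilon)$ with $\epsilon\ne 0$ has a neighborhood $U$ inside the open square $(\delta,1-\delta)^2$, which is contractible precisely because \emph{both} curves have been removed. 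The Hamiltonian $H(p,q)=p$ rotates $U$ along a nontrivial class $\alpha$ not represented by loops in $U$, so Proposition \ref{spectral main theorem} gives the bounded spectrum condition for $U$, hence $U$ is strongly null, hence $M\cup L$ is a stem and Theorem \ref{generalized stem sheavy} yields superheaviness. If you want to salvage your approach, you must explain where the contractibility of the complement of a neighborhood of $M\cup L$ (equivalently, the simultaneous use of both curves) enters your estimate; without that, the argument cannot be correct.
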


$M{\cup}L$ is not displaceable by any homeomorphisms of $\mathbb{T}^2$, however this gives rise to a strongly non-displaceable subset in $\mathbb{C}P^n\times \mathbb{T}^2$.

In fact, Entov-Polterovich showed that for a symplectic manifold $M$, $[M]$-superheavy subsets are strongly non-displaceable, and for the symplectic manifolds $M_1$, $M_2$, the product of superheavy subsets is superheavy in $M_1\times M_2$. Thus we have the following corollary.

\begin{cor}\label{p-torus}
Let $(\mathbb{C}P^n,\omega_{FS})$ be the complex projective space with the Fubini-Study form $\omega_{FS}$ and $C$ be \textit{the Clifford torus} $\{[z_0:{\cdots}:z_n]\in\mathbb{C}P^n; |z_0|={\cdots}=|z_n|\}$ of $\mathbb{C}P^n$.
Then there exists no symplectomorphism $f$ such that $C{\times}(M\cup{L})\cap f(C{\times}(M\cup{L}))=\emptyset$.
\end{cor}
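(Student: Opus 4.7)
The plan is to deduce Corollary \ref{p-torus} by combining Theorem \ref{torus} with two results of Entov-Polterovich already invoked in the paragraph preceding the corollary: the product property for superheavy subsets and the strong non-displaceability of $[M]$-superheavy subsets. There is essentially no new geometric content to produce; the work is bookkeeping of fundamental classes.

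First I would recall that the Clifford torus $C\subset\mathbb{C}P^n$ is a $[\mathbb{C}P^n]$-superheavy subset of $(\mathbb{C}P^n,\omega_{FS})$. This is one of the flagship examples in Entov--Polterovich and I would simply cite \cite{EP09}. Next I would apply Theorem \ref{torus}, which says that $M\cup L$ is $[\mathbb{T}^2]$-superheavy in $(\mathbb{T}^2,\omega_{\mathbb{T}^2})$. The product property of Entov--Polterovich then yields that
\[
C\times(M\cup L)\subset (\mathbb{C}P^n\times\mathbb{T}^2,\,\omega_{FS}\oplus\omega_{\mathbb{T}^2})
\]
is superheavy with respect to the class $[\mathbb{C}P^n]\otimes[\mathbb{T}^2]$, which under the K\"unneth isomorphism for quantum homology coincides with the fundamental class $[\mathbb{C}P^n\times\mathbb{T}^2]$ of the product.

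Once this is in place, the strong non-displaceability criterion of Entov--Polterovich says that any $[\mathbb{C}P^n\times\mathbb{T}^2]$-superheavy subset $X$ satisfies $f(X)\cap\overline{X}\neq\emptyset$ for every symplectomorphism $f$. Since $C$ is a closed submanifold of $\mathbb{C}P^n$ and $M\cup L$ is a closed subset of $\mathbb{T}^2$, the product $C\times(M\cup L)$ is closed in $\mathbb{C}P^n\times\mathbb{T}^2$, so $\overline{C\times(M\cup L)}=C\times(M\cup L)$, and the conclusion of the corollary follows immediately.

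The only genuine step is Theorem \ref{torus} itself, which is the substance of the paper; everything else in the corollary is a formal consequence, so there is no real obstacle beyond quoting the right three theorems of \cite{EP09} in the right order.
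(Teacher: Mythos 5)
Your proposal is correct and follows exactly the same route as the paper: cite the $[\mathbb{C}P^n]$-superheaviness of the Clifford torus (Example \ref{torus and cp^n}), combine with Theorem \ref{torus} via the product theorem (Theorem \ref{product of heavy}) to get $[\mathbb{C}P^n\times\mathbb{T}^2]$-superheaviness of $C\times(M\cup L)$, then invoke strong non-displaceability of $[M]$-superheavy sets (Theorem \ref{non-displaceability}). The only addition is your explicit remark that $C\times(M\cup L)$ is closed, which the paper leaves implicit.
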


The present paper is organized as follows. We review the definitions in
symplectic geometry and spectral invariants in Section \ref{prel} which are needed to prove Theorem \ref{torus}. We introduce and prove the important proposition (Proposition \ref{spectral main theorem}) to prove Theorem \ref{torus}.
In Section \ref{qs lemma}, we prepare some lemmas which is useful to prove Theorem \ref{torus}.
In Section \ref{exam}, we prove Theorem \ref{torus} and Corollary \ref{p-torus}.
In Section \ref{general}, we give and prove a generalization of Theorem \ref{torus} on surfaces with higher genus (Theorem \ref{torus2}).

\subsection{Acknowledgment}

The author would like to thank Professor Takashi Tsuboi for his helpful guidance and advice.

He also thanks Kei Irie,  Professor Yong-Geun Oh, Professor Kaoru Ono and  Professor Michael Usher for their advice, Sobhan Seyfaddini for recommending me to write generalized version of the main result (Theorem \ref{torus2}), Professor Hiroki Kodama and Professor Emmanuel Opshtein for pointing out many mistakes and confusing notations,  Professor Shigenori Matsumoto for giving him the advice to submit early.

\section{Preliminaries}\label{prel}

\subsection{Definitions}
For a function $F\colon M\to\mathbb{R}$ with compact support, we define the \textit{Hamiltonian vector field} $\operatorname{sgrad}F$ associated with $F$ by
\[\omega(\operatorname{sgrad}F,V)=-dF(V)\text{ for any }V \in \mathcal{X}(M),\]
where $\mathcal{X}(M)$ denotes the set of smooth vector fields on $M$.

For a function $F{\colon}M{\times}[0,1]\to\mathbb{R}$ and $t \in [0,1]$, we define $F_t{\colon}M\to\mathbb{R}$ by $F_t(x)=F(x,t)$. 
We denote by $\{f_t\}$ the isotopy which satisfies $f_0=\mathrm{id}$ and $\frac{d}{dt}f_t(x)=(\operatorname{sgrad}F_t)_{f_t(x)}$. We call this \textit{the Hamiltonian path generated by the Hamiltonian function $F_t$}.
The time-1 map $f_1$ of $\{f_t\}$ is called \textit{the Hamiltonian diffeomorphism generated by the Hamiltonian function $F_t$}.
A diffeomorphism $f$ is called \textit{a Hamiltonian diffeomorphism} if there exists a Hamiltonian function $F_t$ with compact support generating $f$.
A Hamiltonian diffeomorphism is a symplectomorphism.

For a symplectic manifold $(M,{\omega})$, we denote by $\mathrm{Symp}(M,\omega)$, $\mathrm{Ham}(M,\omega)$ and $\widetilde{\mathrm{Ham}}(M,\omega)$, the group of symplectomorphisms, the group of Hamiltonian diffeomorphisms of $(M,{\omega})$ and its universal cover, respectively. Note that $\mathrm{Ham}(M,\omega)$ is a normal subgroup of $\mathrm{Symp}(M,\omega)$.

Let $(M,\omega)$ be a symplectic manifold and $\{f_t\}_{t\in[0,1]}$ and $\{g_t\}_{t\in[0,1]}$ be the Hamiltonian paths generated by Hamiltonian functions $F_t$ and $G_t$, respectively.
Then $\{f_tg_t\}_{t\in[0,1]}$ are generated by the Hamiltonian function
$(F{\sharp}G)(x,t)=F(x,t)+G(f_t^{-1}(x),t)$.

A Hamiltonian function $H$ is called \textit{normalized} if $\int_M H_t(x)\omega^n=0$ for any $t\in [0,1]$. 
\begin{defi}
For functions $F$ and $G$ and a symplectic manifold $(M,\omega)$, \textit{the Poisson bracket} $\{F,G\}\in C^\infty(M)$ is defined by
\[\{F,G\}=\omega(\operatorname{sgrad}G,\operatorname{sgrad}F).\]
\end{defi}

\begin{defi}[\cite{EP09}]

Let $(M,\omega)$ be a symplectic manifold.

A subset $U$ of $M$ is said to be  \textit{displaceable} if there exists a Hamiltonian diffeomorphism $f\in\mathrm{Ham}(M,\omega)$ such that $f(U){\cap}\bar{U}={\emptyset}$.
Ohterwise $U$ is said to be \textit{non-displaceable}.

A subset $U$ of $M$ is said to be \textit{weakly displaceable} if there exist a symplectomorphism $f\in\mathrm{Symp}(M,\omega)$ such that $f(U){\cap}\bar{U}={\emptyset}$.
Otherwise $U$ is said to be \textit{strongly non-displaceable}.
\end{defi}

Since $\mathrm{Ham}(M,\omega)\subset\mathrm{Symp}(M,\omega)$, if $U$ is displaceable, then $U$ is weakly displaceable.

\subsection{Spectral invariants}\label{introduction to spectral invariant}

For a closed connected symplectic manifold $(M,\omega)$, define
\[ {\Gamma}=\frac{\pi_2(M)}{\operatorname{Ker}(c_1)\cap\operatorname{Ker}([\omega])}, \]
where $c_1$ is the first Chern class of $TM$ with an almost complex structure compatible with $\omega$.
The Novikov ring of the closed symplectic manifold $(M,\omega)$ is defined as follows:
\[ {\Lambda}=\left\{\sum_{A\in\Gamma}a_AA;a_A\in\mathbb{C},\#\{A;a_A\neq{0},\int_A{\omega}<R\}<\infty\text{ for any real number }R\right\}.\]
The quantum homology $QH_\ast(M,\omega)$ is a $\Lambda$-module isomorphic to $H_\ast(M;\mathbb{C})\otimes_\mathbb{C}\Lambda$ and $QH_\ast(M,\omega)$ has a ring structure with the multiplication called the \textit{quantum product} (\cite{Oh}).
For each element $a\in{QH}_\ast(M,\omega)$, a functional $c(a,\cdot){\colon}C^\infty(M\times[0,1])\to\mathbb{R}$ is defined in terms of the Hamiltonian Floer theory. The functional $c(a,\cdot)$ is called a \textit{spectral invariant} (\cite{Oh}).
To describe the properties of a spectral invariant, we define the spectrum of a Hamiltonian function as follows:

\begin{defi}[\cite{Oh}]
Let $H \in C^\infty(M\times[0,1])$ be a Hamiltonian function on a closed symplectic manifold $M$. The \textit{spectrum} $\operatorname{Spec}(H)$ of $H$ is defined as follows:
\[\operatorname{Spec}(H)=\left\{\int_0^1H(h_t(x),t)dt-\int_{\mathbb{D}^2}u^\ast\omega\right\}\subset\mathbb{R},\]
where $\{h_t\}_{t\in[0,1]}$ is the Hamiltonian path generated by $H$ and $x \in M$ is a fixed point of $h_1$ whose orbit defined by $\gamma^x(t)=h_t(x)\;(t\in[0,1])$ is a contractible loop and $u{\colon}\mathbb{D}^2{\to}M$ is a disc in $M$ such that $u|_{\partial\mathbb{D}^2}=\gamma^x$.

\end{defi}
We define the \textit{non-degeneracy} of Hamiltonian functions as follows:
\begin{defi}
A Hamiltonian function $H\in C^\infty(M\times[0,1])$ is called \textit{non-degenerate} if for any $x \in M$ which is a fixed point of $h$ whose orbit $\gamma^x$ is a contractible loop, 1 is not an eigenvalue of the differential $(h_\ast)_x$.
\end{defi}
The followings proposition summaries the properties of spectral invariants which we need to show our result (\cite{Oh}, \cite{U1}).

\begin{prop}[\cite{Oh}, \cite{U1}]\label{spec inv}
Spectral invariants has the following properties.

\begin{description}
\item[(1)\textit{Non-degenerate spectrality}] $c(a,H)\in\operatorname{Spec}(H)$ for every non-degenerate $H\in{C}^\infty(M{\times}[0,1])$.
\item[(2)\textit{Hamiltonian shift property}]$c(a,H+{\lambda}(t))=c(a,H)+\int_0^1{\lambda}(t)dt$.
\item[(3)\textit{Monotonicity property}]If $H_1 \leq {H_2}$, then $c(a,H_1) \leq c(a,H_2)$.
\item[(4)\textit{Lipschitz property}]The map $H{\mapsto}c(a,H)$ is Lipschitz on $C^\infty(M\times[0,1])$ with respect to the $C^0$-norm.
\item[(5)\textit{Symplectic invariance}]$c(a,f^{\ast}{H})=c(a,H)$ for any $f \in \mathrm{Symp}_0(M,\omega)$ and any $H \in C^{\infty}(M\times[0,1])$.
\item[(6)\textit{Homotopy invariance}]$c(a,H_1)=c(a,H_2)$ for any normalized $H_1$ and $H_2$ generating the same $h\in\widetilde{\mathrm{Ham}}(M)$. Thus one can define $c(a,\cdot){\colon}\widetilde{\mathrm{Ham}}(M)\to\mathbb{R}$ by $c(a,h)=c(a,H)$, where $H$ is a normalized Hamiltonian function generating $h$.
\item[(7)\textit{Triangle inequality}]$c(a\ast{b},fg) \leq c(a,f)+c(b,g)$ for elements $f$ and $g \in \widetilde{\mathrm{Ham}}(M,\omega)$, where $\ast$ denotes the quantum product.
\end{description}
\end{prop}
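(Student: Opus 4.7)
Since Proposition \ref{spec inv} is a compilation of known structural properties of Oh's spectral invariants, my plan is to attribute each clause to its source in the literature rather than to rebuild the Floer-theoretic proofs from scratch. The common thread is the action functional
\[ \mathcal{A}_H(\gamma,u) = \int_0^1 H(\gamma(t),t)\,dt - \int_{\mathbb{D}^2} u^*\omega \]
on the space of capped contractible loops in $M$, whose critical values form $\operatorname{Spec}(H)$ and whose min-max values over Floer cycles representing $a\in QH_*(M,\omega)$ yield $c(a,H)$.

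For items (1)--(4), (6), and (7) I would cite Oh \cite{Oh}. Non-degenerate spectrality holds because, for non-degenerate $H$, the min-max value of $\mathcal{A}_H$ is attained at an honest critical point, i.e. at a capped contractible 1-periodic orbit. The Hamiltonian shift property is immediate from the identity $\mathcal{A}_{H+\lambda}=\mathcal{A}_H+\int_0^1\lambda(t)\,dt$, which shifts all critical values uniformly. Monotonicity and the $C^0$-Lipschitz estimate follow from the pointwise comparison of actions associated with $H_1\le H_2$ and a standard continuation argument. Homotopy invariance reflects that $c(a,\cdot)$ descends to $\widetilde{\mathrm{Ham}}(M)$ once one normalizes the generating Hamiltonian. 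The triangle inequality is the manifestation, under the PSS isomorphism, of the compatibility between the pair-of-pants product on Floer homology and the quantum product on $QH_*(M,\omega)$.

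Symplectic invariance (5) is the subtlest clause and I would cite Usher \cite{U1}, who strengthens invariance from $\mathrm{Ham}(M,\omega)$ to all of $\mathrm{Symp}_0(M,\omega)$ by a filtered identification of the Floer complexes of $H$ and $f^*H$. The main obstacle, were one to give a self-contained account, would be assembling the full package of Hamiltonian Floer theory, quantum homology, and the PSS isomorphism; since this machinery is standard and each of the seven assertions is documented in \cite{Oh} and \cite{U1}, the proof in this paper is simply to refer the reader to those works and use the properties as a black box in the arguments to follow.
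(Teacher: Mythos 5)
The paper gives no proof of this proposition at all: it is stated as a recollection of standard facts with the citations to \cite{Oh} and \cite{U1} placed directly in the proposition header, and the text then moves on. Your proposal — treating the list as a compilation of known structural properties, pointing to Oh and Usher for the Floer-theoretic arguments, and using the properties as a black box thereafter — is exactly the approach the paper takes, with the additional (harmless and accurate) benefit of a one-line heuristic for each clause.
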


\subsection{Heaviness and superheaviness}

M. Entov and L. Polterovich (\cite{EP09}) defined \textit{heaviness} and \textit{superheaviness} of closed subsets in closed symplectic manifolds and gave examples of non-displaceable subsets and strongly non-displaceable subsets.

For an idempotent $a$ of the quantum homology $QH_\ast(M,\omega)$, define the functional $\zeta_a{\colon}C^\infty(M)\to\mathbb{R}$ by
\[\zeta_a(H)=\lim_{l\to\infty}\frac{c(a,lH)}{l},\]
where $c(a,H)$ is the spectral invariant (\cite{Oh}, see Section \ref{introduction to spectral invariant}).

\begin{defi}[\cite{EP09}]
Let $(M,\omega)$ be a $2n$-dimensional closed symplectic manifold and $a$ be an idempotent of the quantum homology $QH_\ast(M,\omega)$.
A closed subset $X$ of ${M}$ is said to be $a$-\textit{heavy} if
\[\zeta_a(H)\geq\inf_XH\text{ for any }H \in C^\infty(M),\]
and is said to be $a$-\textit{superheavy} if
\[\zeta_a(H) \leq \sup_XH\text{ for any }H \in C^\infty(M).\]
A closed subset $X$ of ${M}$ is called heavy (respectively, superheavy) if $X$ is $a$-heavy (respectively, $a$-superheavy) for some idempotent $a$ of $QH_\ast (M,\omega)$.
\end{defi}

\begin{exam}\label{torus and cp^n}
Let $(\mathbb{C}P^n,\omega_{FS})$ be the complex projective space with the Fubini-Study form. The Clifford torus $C=\{[z_0:{\cdots}:z_n]\in\mathbb{C}P^n; |z_0|={\cdots}=|z_n|\}\subset\mathbb{C}P^n$ is a $[\mathbb{C}P^n]$-superheavy subset of $(\mathbb{C}P^n,\omega_{FS})$, hence they are strongly non-displaceable (\cite{BEP} Lemma 5.1, \cite{EP09} Theorem 1.8).
\end{exam}

For a closed oriented manifold $M$, we denote its fundamental class by $[M]$. It is known that $[M]$ is an idempotent of $QH_\ast(M,\omega)$.

\begin{thm}[A part of Theorem 1.4 of \cite{EP09}]\label{non-displaceability}

For a non-trivial idempotent $a$ of ${QH}_\ast(M,\omega)$, the followings hold.
\begin{itemize}
\item[(1)]Every $a$-superheavy subset is $a$-heavy.
\item[(2)]Every $a$-heavy subset is non-displaceable.
\item[(3)]Every $[M]$-superheavy subset is strongly non-displaceable.
\end{itemize}
\end{thm}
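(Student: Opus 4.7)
The plan is to follow the arguments of Entov--Polterovich \cite{EP09}, deriving the three claims from the algebraic properties of the functional $\zeta_a$ together with the spectral invariant properties listed in Proposition \ref{spec inv}. The starting point is the observation that, because $a$ is an idempotent ($a \ast a = a$), the triangle inequality (Proposition \ref{spec inv}(7)) combined with homogeneity gives a subadditivity property of $\zeta_a$ on Poisson-commuting Hamiltonians. This is the basic algebraic tool one uses throughout.

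For part (1), I would apply the subadditivity to $H$ and $-H$ (which Poisson-commute), together with the Hamiltonian shift property (Proposition \ref{spec inv}(2)) applied to the zero Hamiltonian, to conclude $\zeta_a(H) + \zeta_a(-H) \geq 0$ for every smooth $H$. If $X$ is $a$-superheavy, then applying the defining inequality to $-H$ yields $\zeta_a(-H) \leq \sup_X(-H) = -\inf_X H$, which combined with the previous inequality gives $\zeta_a(H) \geq \inf_X H$; this is exactly $a$-heaviness.

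For parts (2) and (3), the crucial preliminary step is the vanishing property: if $U \subset M$ is an open set displaceable by a Hamiltonian diffeomorphism (resp.\ by a symplectomorphism in $\mathrm{Symp}_0$ when $a=[M]$) and $\mathrm{supp}(H) \subset U$, then $\zeta_a(H) = 0$. One proves this by bounding $c(a, lH)$ linearly in $l$ using a Hamiltonian that displaces $U$ and the triangle inequality, then dividing by $l$ and letting $l \to \infty$. Granted this, suppose $X$ is $a$-heavy and $\phi$ displaces $X$; choose $H \leq 0$ with $H \equiv -1$ in a neighborhood of $X$ and $\mathrm{supp}(H)$ contained in an open set displaced by $\phi$. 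Then $\zeta_a(H) \geq \inf_X H = -1$ by heaviness but $\zeta_a(H) = 0$ by the vanishing property, a contradiction, proving (2). For (3), the same argument works with weakly displaceable open sets, because the symplectic invariance property (Proposition \ref{spec inv}(5)) together with the neutrality of $[M]$ under the quantum product allows the vanishing property to be upgraded from Hamiltonian displaceability to weak displaceability.

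The main obstacle is not the formal deduction sketched above but the vanishing of $\zeta_a$ on Hamiltonians supported in displaceable open sets. This reduces to a spectral displacement-energy estimate of the form $c(a, H) \leq E(\mathrm{supp}(H))$, whose proof requires genuine Floer-theoretic input (an analysis of continuation maps and action-filtered Floer homology) and constitutes the technical heart of the argument in \cite{EP09}.
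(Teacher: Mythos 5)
The paper does not actually prove this theorem; it is quoted verbatim from Entov--Polterovich \cite{EP09} as a black box, so there is no internal proof to compare against. Judged on its own merits against the actual argument in \cite{EP09}, your sketch has one correct part, one sign error that invalidates the conclusion, and one part that follows a route I do not believe can be made to work as stated.

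Part (1) is correct: the subadditivity $\zeta_a(H)+\zeta_a(-H)\geq 0$ (from $a\ast a=a$ plus the triangle inequality and the fact that $lH\,\sharp\,(-lH)$ generates the constant path, so $c(a,0)\leq c(a,lH)+c(a,-lH)$), combined with $\zeta_a(-H)\leq\sup_X(-H)=-\inf_X H$, gives $\zeta_a(H)\geq -\zeta_a(-H)\geq\inf_X H$.

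Part (2) has a genuine flaw. You choose $H\leq 0$ with $H\equiv -1$ near $X$, so $\inf_X H=-1$ and heaviness gives $\zeta_a(H)\geq -1$; the vanishing property gives $\zeta_a(H)=0$. But $0\geq -1$ is perfectly consistent: there is no contradiction. You need the opposite sign: take $H\geq 0$ with $H\equiv 1$ on a neighborhood of $X$ and $\operatorname{supp}H$ contained in a displaceable open neighborhood $U$ of $X$. Then heaviness forces $\zeta_a(H)\geq\inf_X H=1$, while the vanishing property forces $\zeta_a(H)=0$, and $0\geq 1$ is the desired contradiction.

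Part (3): the claim that ``the vanishing property can be upgraded to weak displaceability'' via symplectic invariance is not substantiated and, as far as I can tell, does not go through in the form you describe. The vanishing $\zeta_a(H)=0$ for displaceable $\operatorname{supp}H$ rests on a spectral displacement-energy bound that uses a \emph{Hamiltonian} generating the displacing map; a general symplectomorphism (possibly not even in $\mathrm{Symp}_0$) admits no such generator, and Proposition~\ref{spec inv}(5) does not by itself produce one. The argument in \cite{EP09} is different and cleaner: one first observes that an $a$-superheavy set intersects every $a$-heavy set (if $X$ is $a$-heavy, $Y$ is $a$-superheavy, and $X\cap Y=\emptyset$, take $H$ with $H\equiv 1$ on $X$, $H\equiv 0$ on $Y$, $0\leq H\leq 1$; then $\zeta_a(H)\geq 1$ and $\zeta_a(H)\leq 0$, a contradiction). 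One then shows that if $X$ is $[M]$-superheavy and $f\in\mathrm{Symp}(M,\omega)$, then $f(X)$ is also $[M]$-superheavy, because $f_\ast[M]=[M]$ and hence $c([M],\cdot)$ is invariant under the full symplectomorphism group, not just $\mathrm{Symp}_0$. Combining these with part (1) ($X$ is $[M]$-heavy) gives $X\cap f(X)\neq\emptyset$. I would replace your part (3) with this intersection argument.
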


\begin{defi}
Let $(M,\omega)$ be a $2n$-dimensional closed symplectic manifold. Take an idempotent $a$ of the quantum homology $QH_\ast(M,\omega)$.
An open subset $U$ of $M$ is said to satisfy \textit{the bounded spectrum condition} (with respect to $a$) if there exists a constant $E>0$ such that
\[|c(a,F)| \leq E\]
for any Hamiltonian function $F\in{C}^\infty(U\times[0,1])$ with support in $U\times[0,1]$.
\end{defi}

Open subsets satisfying the bounded spectrum condition play an essential role in the present paper.

\begin{exam}
A stably displaceable subset of a closed symplectic manifold satisfies the bounded spectrum condition with respect to any idempotent $a$ (\cite{Se} Lemma 4.1). In particular, a displaceable subset of a closed symplectic manifold satisfies the bounded spectrum condition with respect to any idempotent $a$ (\cite{U2} Proposition 3.1).
\end{exam}

\section{Main proposition}

Open subsets $U$ with volume greater than the half of that of $M$ cannot be displaced but some of them satisfy the bounded spectrum condition for non simply connected symplectic manifold.

The main result of this paper is the following theorem.
\begin{prop}\label{spectral main theorem}
Let $(M,\omega)$ be a closed symplectic manifold.
Let $\alpha$ be a nontrivial free homotopy class of free loops on $M$; $\alpha\in [\mathbb{S}^1,M]$, $\alpha\neq 0$. Let $U$ be an open subset of $M$.
Assume that there exists a Hamiltonian function $H\in{C}^\infty(M\times[0,1])$ which satisfies the followings:
\begin{itemize}
\item[(1)]$h_1|_U=\mathrm{id}_U$,
\item[(2)]for any $x\in{U}$, the free loop $\gamma^x{\colon}\mathbb{S}^1\to{M}$ defined by $\gamma^x(t)=h_t(x)$
belongs to $\alpha$, and
\item[(3)]$\alpha\notin{i}_\ast([\mathbb{S}^1,U])$.
\end{itemize}
Here $i{\colon}U{\to}M$ is the inclusion map and $\{h_t\}_{t \in [0,1]}$ is the Hamiltonian path generated by $H$.
Then $U$ satisfies the bounded spectrum condition with respect to any idempotent $a$ of ${QH}_\ast(M,\omega)$.
\end{prop}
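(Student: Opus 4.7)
The plan is to fix an arbitrary $F\in C^\infty(M\times[0,1])$ supported in $U\times[0,1]$ and generating $\{f_t\}$, form the composite Hamiltonian $H\sharp F$ generating $\{h_tf_t\}$, show that $\operatorname{Spec}(H\sharp F)$ is contained in a set depending only on $H$, and then derive the bounded spectrum condition via the triangle inequality.

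First I would classify the contractible $1$-periodic orbits of $h_1f_1$. Since $h_1|_U=\mathrm{id}$ by (1) and $f_t|_{M\setminus U}=\mathrm{id}$ by the support hypothesis on $F$, every fixed point $x$ of $h_1f_1$ lies either in $U$ (and is then fixed by $f_1$) or in $M\setminus U$ (and is then fixed by $h_1$). For $x\in U$, the map $\Phi\colon[0,1]^2\to M$, $(s,t)\mapsto h_s(f_t(x))$, exhibits the orbit $t\mapsto h_t(f_t(x))$ as freely homotopic to the concatenation of $t\mapsto f_t(x)$ (a loop in $U$, of free homotopy class $\beta\in i_\ast[\mathbb{S}^1,U]$) and $t\mapsto h_t(x)$ (a loop in class $\alpha$, by (2)). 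Its free homotopy class is therefore $\alpha\cdot\beta$, which cannot be trivial by (3). Hence contractible orbits only arise from fixed points $x\in M\setminus U$, where the orbit under $h_tf_t$ reduces to $t\mapsto h_t(x)$.

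Second, I would compute the action of such an orbit. Using the formula $(H\sharp F)(y,t)=H(y,t)+F(h_t^{-1}(y),t)$ and evaluating at $y=h_t(x)$ with $x\in M\setminus U$, one obtains $(H\sharp F)(h_t(x),t)=H(h_t(x),t)+F(x,t)=H(h_t(x),t)$, since $F(x,t)=0$. Hence the action of $x$ under $H\sharp F$ equals its action under $H$, and $\operatorname{Spec}(H\sharp F)\subseteq\operatorname{Spec}(H)$, independently of $F$. By the non-degenerate spectrality (Proposition \ref{spec inv}(1)) applied to generic perturbations, together with the Lipschitz property (Proposition \ref{spec inv}(4)), the map $F\mapsto c(a,H\sharp F)$ takes values in $\operatorname{Spec}(H)$ on a dense subset of the connected affine space of admissible $F$'s and is continuous there. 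Since $\operatorname{Spec}(H)$ is a nowhere-dense subset of $\mathbb{R}$, hence totally disconnected, this continuous function is constant. Setting $F=0$ gives $c(a,H\sharp F)=c(a,H)$ for every admissible $F$, and the same reasoning with $a$ replaced by $[M]$ yields $c([M],H\sharp F)=c([M],H)$.

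Finally, writing $h,f$ for the lifts in $\widetilde{\mathrm{Ham}}(M,\omega)$ generated by $H,F$ and applying the triangle inequality (Proposition \ref{spec inv}(7)) to the decompositions $f=\bar h\cdot(hf)$ and $hf=h\cdot f$ (using that $[M]$ is the unit of the quantum product), I obtain
\[
c(a,H)-c([M],H)\;\le\; c(a,F)\;\le\; c(a,\bar H)+c([M],H),
\]
so $|c(a,F)|\le E$ for a constant $E>0$ depending only on $H$ and $a$, which is the bounded spectrum condition. The main obstacle I foresee is the constancy step: it demands both the nowhere-density of $\operatorname{Spec}(H)$ (automatic in the rational case, which already covers Theorem \ref{torus}, but requiring Usher's refined spectrality for general closed symplectic manifolds) and a careful choice of perturbations making $H\sharp F$ non-degenerate in spite of the inherent degeneracy of $H$ on $U$.
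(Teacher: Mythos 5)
Your overall strategy coincides with the paper's: classify the contractible $1$-periodic orbits of $H\sharp F$, show they all come from $M\setminus U$ and carry $H$'s action, deduce $\operatorname{Spec}(H\sharp F)\subset\operatorname{Spec}(H)$, upgrade this to $c(a,H\sharp F)=c(a,H)$ by a continuity-plus-spectrality argument, and finish with the triangle inequality. The orbit classification via the square $(s,t)\mapsto h_s(f_t(x))$, the action computation, and the final triangle-inequality bookkeeping are all correct.

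The gap is precisely at the step you flag at the end, and it is not a small one. You propose to perturb $F$ to make $H\sharp F$ non-degenerate and then run a connectedness argument over the space of admissible $F$'s. But perturbing $F$ cannot help: the contractible orbits of $H\sharp F$ all sit in $M\setminus U$, where $F$ vanishes identically, so they coincide with orbits of $H$ there and their non-degeneracy is governed by $H$ alone, which you never modify. (The degeneracy of $H$ ``on $U$'' that you worry about is a red herring, since those orbits lie in the class $\alpha\neq 0$ and never enter $\operatorname{Spec}$.) The paper instead replaces $H$ by a $C^2$-small non-degenerate approximation $H_n$, which destroys hypothesis (1): points of $U$ are no longer fixed by $h_{n,1}$, so $\gamma^x_n$ is not a loop and ``belongs to $\alpha$'' no longer makes sense. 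Repairing this is the delicate technical step of the paper's proof: for $n$ large and $x\in\bigcup_t\mathrm{supp}F_t$ one produces a short connecting path $\beta_n^x$ in $U$ from $h_{n,1}(x)$ to $x$ such that $\gamma^x_n\sharp\beta_n^x$ still represents $\alpha$, and these paths are what feed the free-homotopy argument for the loops appearing in the perturbed family. Your proposal contains none of this, and without it the inclusion $\operatorname{Spec}(\cdot)\subset\operatorname{Spec}(H_n)$ for the perturbed Hamiltonians is not established, so the constancy step has no foundation. Two smaller points: the correct hypothesis for the constancy step is that $\operatorname{Spec}(H_n)$ has \emph{measure zero} (a countable union of cosets of the countable period group), so that the connected range of the one-parameter family $u\mapsto c(a,L_n^u)$ is a measure-zero interval, hence a point; ``nowhere dense, hence totally disconnected'' fails in the irrational case. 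And the interpolation should be taken along a fixed one-parameter family such as $u\mapsto L_n^u$ (reparametrizing ``flow $H_n$, then flow $uF$''), rather than over the whole affine space of $F$'s, as this is what makes the homotopy-invariance identifications $c(a,L_n^0)=c(a,H_n)$ and $c(a,L_n^1)=c(a,F\sharp H_n)$ immediate.
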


The proof of Proposition \ref{spectral main theorem} is based on the idea of K. Irie in the proof of Theorem 2.4 of \cite{Ir}.

\begin{proof}
Fix a Hamiltonian function $F\in{C}^\infty_c(U\times[0,1])$.
To use the non-degenerate spectral property, we approximate $H$ by non-degenerate Hamiltonian functions.
Take a sequence of non-degenerate Hamiltonian functions $H_n$ which converges to $H$ in the $C^2$-norm.
We denote by $\{h_{n,t}\}_{t\in[0,1]}$ the Hamiltonian path generated by $H_n$ and denote by $\gamma^x_n$ the path defined by $\gamma^x_n(t)=h_{n,t}(x)$.
For a large enough number $n\in\mathbb{N}$ and $x \in \bigcup_{t \in [0,1]}\mathrm{supp}(F_t)$, there exists a path $\beta_n^x{\colon}[0,1]\to{U}$ in $U$ such that $\beta_n^x(0)=h_{n,1}(x)$ and $\beta_n^x(1)=x$ and the composed path $\gamma^x_n \,\sharp\, \beta_n^x$ represents $\alpha\in[\mathbb{S}^1,M]$.

Choose a smooth function $\chi{\colon}[0,\frac{1}{2}]\to[0,1]$ and a positive real number $\epsilon\in(0,\frac{1}{4})$ satisfying the following:

\begin{itemize}
\item $\chi^\prime(t)\geq0$ for any $t\in{[0,\frac{1}{2}]}$, and
\item $\chi(t)=0$ for any $t\in{[0,{\epsilon}]}$, and $\chi(t)=1$ for any $t\in{[\frac{1}{2}-{\epsilon},\frac{1}{2}]}$.
\end{itemize}

For $u \in [0,1]$ we define the new Hamiltonian function $L_{n}^u \in C^\infty(M\times[0,1])$ as follows:
\begin{equation*}
L_{n}^u(x,t)=
\begin{cases}
\chi^\prime(t)H_n(x,\chi(t)) & \text{when }t\in[0,\frac{1}{2}] \\
u\chi^\prime(t-\frac{1}{2})F(x,\chi(t-\frac{1}{2}))& \text{when }t\in[\frac{1}{2},1].
\end{cases}
\end{equation*}
Since $\chi$ is constant on neighborhoods of $0$ and $\frac{1}{2}$, $L_{n}^u$ is a smooth Hamiltonian function.

We claim that $\operatorname{Spec}(L_{n}^u) \subset \operatorname{Spec}(H_n)$ for an large enough number $n\in\mathbb{N}$ and any $u\in[0,1]$. We denote by ${\{l_{n,t}^u\}}_{t \in [0,1]}$ the Hamiltonian path generated by $L_{n}^u$.
Let $x\in{M}$ be a fixed point of $l_{n,1}^u$ whose orbit $\lambda_n^{u,x}$ defined by $\lambda_n^{u,x}(t)=l_{n,t}^u(x)$ is contractible. If $x\notin{\bigcup_{t\in[0,1]}\mathrm{supp}(F_t)}$, $x$ is also a fixed point of $h_{1}$ and $\lambda^{u,x}_n(t)$ coincides with $\gamma^x_n$ up to parameter change. Hence $\gamma^x_n$ is contractible.  Since $\int_0^1H_n(h_t(x),t)dt=\int_0^1L^u_n(l_t^u(x),t)dt$, the element of $\operatorname{Spec}L^u_n$ given by the fixed point of $x$ belong to $\operatorname{Spec}(H_n)$. If $x \in \bigcup_{t\in[0,1]}\mathrm{supp}(F_t)$, since $n$ is assumed to be large enough, there exists a path $\beta_n^x$ in $U$ such that $\beta_n^x(0)=h_1^n(x)$ and $\beta_n^x(1)=x$ and $\gamma^x_n \,\sharp\, \beta_n^x$ represents $\alpha\in[\mathbb{S}^1,M]$. Since $\bigcup_{t \in [0,1]}\mathrm{supp}(F_t)\subset U$, the free loop $(\bar{\beta}_n^x\,\sharp\,\bar{\gamma}_n^x)\,\sharp\lambda_n^{u,x}$ is homotopic to a free loop in $U$. If $\lambda_n^{u,x}$ is contractible, $\bar{\beta}_n^x\,\sharp\,\bar{\gamma}_n^x$ is also homotopic to a free loop in $U$ and this contradicts $\alpha\notin{i}_\ast([\mathbb{S}^1,U])$. Hence $\lambda_n^{u,x}$ is not contractible. Thus $\operatorname{Spec}(L_n^u) \subset \operatorname{Spec}(H_n)$ holds.
Since $L_n^0$ and $H_n$ generate the same element of $\widetilde{\mathrm{Ham}}(M,\omega)$, the homotopy invariance implies 
\[c(a,L_n^0-\int_ML_n^0\omega^n)=c(a,H_n-\int_MH_n\omega^n).\]
By the Hamiltonian shift property and $\int_0^1\int_ML_n^0\omega^ndt=\int_0^1\int_MH_n\omega^ndt$, \[c(a,L_n^0)=c(a,L_n^0-\int_ML_n^0\omega^n)+\int_0^1\int_ML_n^0\omega^ndt=c(a,H_n-\int_MH_n\omega^n)+\int_0^1\int_MH_n\omega^ndt=c(a,H_n).\]

The Lipschitz property asserts that $c(a,L_n^u)$ depends continuously on $u$. Since $\operatorname{Spec}(H_n)$ is a measure-zero set, the non-degenerate spectrality implies that $c(a,L_n^u)$ is a constant function of $u$. Hence $c(a,L_n^u)=c(a,H_n)$ for any $u \in [0,1]$.

Since $L_n^1$ and $F{\sharp}H_n$ generates the same element of $\widetilde{\mathrm{Ham}}(M,\omega)$, by a computation as above, $c(a,F{\sharp}H_n)=c(a,L^1_n)$. Thus $c(a,F{\sharp}H_n)=c(a,L^1_n)=c(a,H_n)$. Then the triangle inequality implies
\begin{align*}
c(a,F)& \leq c(a,F{\sharp}H_n)+c(a,\bar{H_n})\\
      &=c(a,L_n^1)+c(a,\bar{H_n})\\
      &=c(a,H_n)+c(a,\bar{H_n}),\text{ and}
\end{align*}
\begin{align*}
c(a,F)& \geq c(a,F{\sharp}H_n)-c(a,H_n)\\
      &=c(a,H_n)+c(a,H_n)=0.
\end{align*}
Since Lipschitz properties implies $\lim_{n\to\infty}c(a,H_n)=c(a,H)$ and $\lim_{n\to\infty}c(a,\bar{H_n})=c(a,\bar{H})$, we have
\[0\leq c(a,F) \leq c(a,H)+c(a,\bar{H}).\]
\end{proof}

\section{the bounded spectrum condition and an $a$-stem}\label{qs lemma}

We need the argument in this section to prove the superheaviness of Theorem \ref{torus}.

\subsection{An $a$-stem}

\begin{defi}
An open subset $U$ of $M$ is said to be $a$-null
if for $G\in C^\infty(U)$,
\[\zeta_a(G)=0.\]
An open subset $U$ of $M$ is said to be strongly $a$-null
if for $F\in C^\infty(M)$ and $G\in C^\infty(U)$ with $\{F,G\}=0$,
\[\zeta_a(F+G)=\zeta_a(F).\]
A subset $X$ of $M$ is said to be (strongly) $a$-null if there exists a (strongly) $a$-null open neighborhood $U$ of $X$.
\end{defi}

$a$-nullity is defined by Monzner-Vichery-Zapolsky \cite{MVZ}.
If a subset $X$ of $M$ is strongly $a$-null, $X$ is $a$-null.

\begin{prop}\label{bsc is scc}
Let $(M,\omega)$ be a $2n$-dimensional closed symplectic manifold. For an idempotent $a$ of $QH_\ast(M,\omega)$, if an open subset $U$ of $M$ satisfies the bounded spectrum condition with respect to $a$, then $U$ is strongly $a$-null.
\end{prop}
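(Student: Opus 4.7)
The plan is to derive strong $a$-nullity from the bounded spectrum condition by combining the idempotent identity $a \ast a = a$ with the triangle inequality in Proposition \ref{spec inv}(7). Fix $F \in C^\infty(M)$ and $G$ supported in $U$ satisfying $\{F,G\}=0$; the goal is to show $\zeta_a(F+G)=\zeta_a(F)$.

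The central observation is that the Poisson-bracket hypothesis makes the sum and the $\sharp$-concatenation coincide exactly as Hamiltonian functions. Indeed, $\{F,G\}=0$ is equivalent to $G$ being invariant under the autonomous flow $\{f_t\}$ generated by $F$, so $G \circ f_{lt}^{-1} = G$ for every $l$ and $t$, giving
\begin{equation*}
(lF \sharp lG)(x,t) = lF(x) + lG(f_{lt}^{-1}(x)) = l(F+G)(x).
\end{equation*}
Applying the same observation to the pair $F+G$ and $-G$ (whose Poisson bracket also vanishes) yields $l(F+G) \sharp (-lG) = lF$ as functions. This removes any homotopy-invariance or normalization subtlety between sum and product.

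The main calculation then proceeds in three steps. First, apply the triangle inequality with $a \ast a = a$ to the two decompositions above, obtaining
\begin{align*}
c(a, l(F+G)) &\leq c(a, lF) + c(a, lG),\\
c(a, lF) &\leq c(a, l(F+G)) + c(a, -lG).
\end{align*}
Second, since $\pm lG$ has support in $U$ for every $l$, the bounded spectrum condition supplies a single constant $E > 0$ with $|c(a, \pm lG)| \leq E$ uniformly in $l$. Third, divide both displays by $l$ and let $l \to \infty$; the uniformly bounded terms vanish and the two resulting inequalities pinch together to produce $\zeta_a(F+G) = \zeta_a(F)$.

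Given the machinery of Proposition \ref{spec inv}, the argument is almost entirely bookkeeping; the only non-formal step is the identity $lF \sharp lG = l(F+G)$, and this is precisely where the hypothesis $\{F,G\}=0$ enters. The main point of care is to confirm that the bounded spectrum condition genuinely applies to the time-independent, compactly supported functions $\pm lG$, regarded as elements of $C^\infty(U \times [0,1])$ with support in $U \times [0,1]$; once this is granted, no substantive obstacle remains.
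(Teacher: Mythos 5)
Your proof is correct, and it takes a genuinely more direct route than the paper's. The paper derives Proposition~\ref{bsc is scc} by first building up the machinery of the asymptotic invariant $\mu_a$: it establishes controlled quasi-additivity and the Calabi property (Proposition~\ref{cq}) via Banyaga's fragmentation norm and Lemma~\ref{fragm=1}, then proves a commuting-additivity statement (Lemma~\ref{kahou2}) for $\mu_a$, and finally translates back to $\zeta_a$ using the identity $\zeta_a(H)=-\mathrm{Vol}(M,\omega)^{-1}(\int_M H\omega^n-\mu_a(h))$. Your argument bypasses all of this: the key observation that $\{F,G\}=0$ with $F,G$ autonomous forces the exact function-level identities $lF\,\sharp\,lG=l(F+G)$ and $l(F+G)\,\sharp\,(-lG)=lF$ lets you apply the triangle inequality with $a\ast a=a$ directly, and the bounded spectrum condition pins $|c(a,\pm lG)|\leq E$ uniformly in $l$, giving $|c(a,l(F+G))-c(a,lF)|\leq E$ and hence $\zeta_a(F+G)=\zeta_a(F)$ after dividing by $l$. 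This is shorter and more self-contained; what the paper's longer route buys is the controlled quasi-additivity and Calabi property of $\mu_a$ as standalone statements, which are of independent interest even though this particular proposition does not need them.

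One small point to make explicit: Proposition~\ref{spec inv}(7) is stated for elements of $\widetilde{\mathrm{Ham}}(M,\omega)$, so to invoke the inequality $c(a,F\,\sharp\,G)\leq c(a,F)+c(a,G)$ at the level of Hamiltonian functions you should pass through the Hamiltonian shift property: replacing $F$, $G$ by their normalizations changes neither the generated flows nor the $\sharp$-concatenation (since adding $\lambda(t)$ does not alter $\operatorname{sgrad}$), and the normalization of $F\,\sharp\,G$ is $(F-\lambda_F)\,\sharp\,(G-\lambda_G)$, so the triangle inequality for $\widetilde{\mathrm{Ham}}$ elements together with the shift property gives the function-level inequality. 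You already flag this when you say the identity removes the normalization subtlety; spelling out the shift-property step would make the argument fully self-contained. With that clarification the proof is complete and correct.
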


M. Entov and L. Polterovich defined stems to give examples of superheavy subsets.
We define $a$-stems which generalizes a little the notion of stems and they exhibits $a$-superheaviness.

We generalize the argument of Entov and Polterovich as follows.

\begin{defi}\label{generalized stem}
Let $\mathbb{A}$ be a finite-dimensional Poisson-commutative subspace of $C^\infty(M)$ and $\Phi\colon M\to\mathbb{A}^\ast$ be the moment map defined by $\langle \Phi(x),F \rangle =F(x)$. Let $a$ be a non-trivial idempotent of $QH_\ast(M,\omega)$. A non-empty fiber $\Phi^{-1}(p)$, $p \in \mathbb{A}^\ast$ is called \textit{an $a$-stem} of $\mathbb{A}$ if all non-empty fibers $\Phi^{-1}(q)$ with $q \neq p$ is strongly $a$-null. If a subset of $M$ is an $a$-stem of a finite-dimensional Poisson-commutative subspace of $C^\infty(M)$, it is called just \textit{an $a$-stem}.
\end{defi}

The following theorem is proved in Subsection \ref{generalize stem}. The proof of this theorem is almost same as the proof of Theorem 1.8 of \cite{EP09}, but we prove it for the convenience of the reader.

\begin{thm}\label{generalized stem sheavy}
For every idempotent $a$ of $QH_\ast (M,\omega)$, every $a$-stem is an $a$-superheavy subset.
\end{thm}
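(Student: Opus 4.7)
The plan is to follow the Entov--Polterovich proof of superheaviness of classical stems in Theorem 1.8 of \cite{EP09}, replacing the use of displaceability by iterated application of strong $a$-nullity. The crux is a key lemma: for every smooth function $f : \mathbb{A}^{\ast} \to \mathbb{R}$, the pullback satisfies $\zeta_a(f \circ \Phi) = f(p)$. Once this is established, the $a$-superheaviness of $X = \Phi^{-1}(p)$ follows by monotonicity from an appropriate majorization of $H$ by a pullback from $\mathbb{A}^{\ast}$.

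To prove the key lemma, I would use the Hamiltonian shift property to reduce to $f(p) = 0$ and show $|\zeta_a(f \circ \Phi)| < \epsilon$ for arbitrary $\epsilon > 0$. Since $\Phi(M)$ is compact in $\mathbb{A}^{\ast}$, pick an open $V \ni p$ with $|f| < \epsilon$ on $V$. For each $q \in \Phi(M) \setminus V$, the non-empty fiber $\Phi^{-1}(q)$ is strongly $a$-null by hypothesis, so it admits a strongly $a$-null open neighborhood; a tube-lemma argument shrinks this neighborhood to one of the form $\Phi^{-1}(V_q)$ for an open $V_q \ni q$ in $\mathbb{A}^{\ast}$, which remains strongly $a$-null as an open subset. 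Compactness extracts a finite subcover $V_{q_1}, \ldots, V_{q_k}$ of $\Phi(M) \setminus V$. Take a subordinate smooth partition of unity $\rho + \sum_i \rho_i = 1$ on a neighborhood of $\Phi(M)$ in $\mathbb{A}^{\ast}$, and decompose
\[
f \circ \Phi \; = \; (\rho f) \circ \Phi \; + \; \sum_{i=1}^k (\rho_i f) \circ \Phi.
\]
All summands are functions of $\Phi$, so they pairwise Poisson-commute (since $\mathbb{A}$ is Poisson-commutative), and the $i$th summand is supported in the strongly $a$-null set $\Phi^{-1}(V_{q_i})$. Applying the defining property of strong $a$-nullity one index at a time, with $F$ the accumulating partial sum and $G$ the summand being peeled off, removes all the $(\rho_i f) \circ \Phi$ pieces, leaving $\zeta_a(f \circ \Phi) = \zeta_a((\rho f) \circ \Phi)$. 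The Lipschitz property then bounds this by $\|(\rho f) \circ \Phi\|_{C^0} < \epsilon$, and $\epsilon \to 0$ closes the lemma.

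For the main argument, fix $H \in C^\infty(M)$ and set $c = \sup_X H$. Given $\epsilon > 0$, the function $H' = H - c - \epsilon$ is at most $-\epsilon$ on $X$, so by continuity and compactness of $X$, strictly less than $-\epsilon/2$ on some open neighborhood $W \supset X$. The tube lemma provides an open $V \ni p$ with $\Phi^{-1}(V) \subset W$. I then construct a smooth $f : \mathbb{A}^{\ast} \to \mathbb{R}$ satisfying $f \circ \Phi \geq H'$ on $M$ and $f(p) = -\epsilon/4$ by setting $f = (\sup_M H') \phi - (\epsilon/4)(1 - \phi)$ for a smooth cutoff $\phi$ on $\mathbb{A}^{\ast}$ equal to $0$ near $p$ and $1$ on $\mathbb{A}^{\ast} \setminus V$; a direct check using $H' < -\epsilon/2$ on $\Phi^{-1}(V)$ confirms $f \circ \Phi \geq H'$. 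Monotonicity combined with the key lemma yields $\zeta_a(H') \leq \zeta_a(f \circ \Phi) = f(p) < 0$, and the Hamiltonian shift property then gives $\zeta_a(H) \leq c + \epsilon$. Letting $\epsilon \to 0$ produces $\zeta_a(H) \leq \sup_X H$, as required.

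The main technical obstacle is the bookkeeping in the iterated use of strong $a$-nullity: verifying at each step that the accumulating sum still Poisson-commutes with the next summand to be removed. This turns out to be clean because every term in the decomposition is of the form $g \circ \Phi$ for some smooth $g$ on $\mathbb{A}^{\ast}$, and any two functions of the moment map of a Poisson-commutative subspace automatically Poisson-commute; the rest of the argument is standard partition-of-unity and tube-lemma manipulation.
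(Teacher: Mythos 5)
Your proof is correct, and the engine driving it is the same as the paper's: cover $\Phi(M)$ by one neighborhood of $p$ together with finitely many open sets whose $\Phi$-preimages are strongly $a$-null, decompose a pullback $f\circ\Phi$ via a subordinate partition of unity, peel off the strongly $a$-null summands one at a time (using that all pieces, being functions of $\Phi$, Poisson-commute), and invoke monotonicity to compare an arbitrary Hamiltonian with a pullback. The difference lies in the reductions surrounding that mechanism. The paper first appeals to Lemma \ref{thin} and Proposition \ref{EP fundamental property} (Entov--Polterovich's Proposition 4.1) to reduce $a$-superheaviness of $X=\Phi^{-1}(p)$ to showing $\zeta_a(\hat H)=0$ for $\hat H\geq 0$ vanishing on a neighborhood $V$ of $X$, then majorizes $\hat H\leq\Phi^\ast H$ with $H\geq 0$ and $H\equiv 0$ near $p$; after peeling off the null pieces the surviving term is $\Phi^\ast(H\rho_0)$, which is identically zero since $H$ vanishes on $\mathrm{supp}\,\rho_0$, so no further estimate is needed. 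You instead isolate the standalone evaluation lemma $\zeta_a(f\circ\Phi)=f(p)$ for every smooth $f$ on $\mathbb{A}^\ast$, which after the peeling step leaves the term $(\rho f)\circ\Phi$, not identically zero but $C^0$-small near $p$, so you need the Lipschitz property to finish; you then deduce superheaviness by explicitly constructing a majorizing $f\circ\Phi\geq H-\sup_X H-\epsilon$ with $f(p)$ negative, rather than citing Proposition 4.1. Both routes are valid and of comparable length; yours is slightly more self-contained (no reliance on EP's characterization of superheaviness) and the key lemma $\zeta_a(f\circ\Phi)=f(p)$ is a clean reusable statement that simultaneously delivers $a$-heaviness, at the cost of one extra Lipschitz estimate and a somewhat more elaborate cutoff construction for the majorant.
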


\subsection{Asymptotic spectral invariants and Proof of Proposition \ref{bsc is scc}}\label{qs lemmas}

\begin{defi}
Let $(M,\omega)$ be a $2n$-dimensional closed symplectic manifold.
For an idempotent $a$ of ${QH}_\ast(M,\omega)$, we construct \textit{asymptotic spectral invariant} $\mu_a{\colon}\widetilde{\mathrm{Ham}}(M,\omega)\to\mathbb{R}$ as follows (\cite{EP03}, \cite{EP06}):
\[\mu_a(f)=-\mathrm{Vol}(M,\omega )\cdot\lim_{l\to\infty}\frac{c(a,f^l)}{l},\]
where  $\mathrm{Vol}(M,\omega )=\int_M{\omega}^n$ is the volume of $(M,\omega)$.
\end{defi}

For any $H\in C^\infty(M)$, by the definition of $\zeta_a$ and $\mu_a$,
\[\zeta_a(H)=-\mathrm{Vol}(M,\omega )^{-1}(\int_MH\omega^n-\mu_a(h)),\]
where $h$ is the Hamiltonian diffeomorphism generated by $H$.

$\mu_a$ is known to be an \textit{Entov-Polterovich pre-quasimorphism} (\cite{EP06}, \cite{FOOO}).
Entov-Polterovich \cite{EP03} and Ostrover \cite{Os06} gave several conditions on $a\in{QH}_\ast(M,\omega)$ under which $\mu_a$ is a \textit{Calabi quasi-morphism}.
We omit the definitions of an Entov-Polterovich pre-quasimorphism and a Calabi quasi-morphism.

\noindent
\begin{defi}[{\cite{Ba}}]\label{Fragmentation}

Let $(M,{\omega})$ be a $2n$-dimensional closed symplectic manifold.
Given an open set $U$ of ${M}$, we denote by $\widetilde{\mathrm{Ham}}_U(M,\omega)$ the set of elements of $\widetilde{\mathrm{Ham}}(M,\omega)$ which is generated by Hamiltonian functions with support in $U$. By Banyaga's fragmentation lemma \cite{Ba}, each $\phi \in \widetilde{\mathrm{Ham}}(M,\omega)$ can be represented as a product of elements of the form $\psi\theta{\psi}^{-1}$ with $\theta \in\widetilde{\mathrm{Ham}}_U(M,\omega)$, $\psi \in \widetilde{\mathrm{Ham}}(M,\omega)$.
Denote by $||{\phi}||_U$ the minimal number of factors in such a product.
This is called the \textit{fragmentation norm}.

\end{defi}

\noindent
\begin{prop}[Controlled quasi-additivity and Calabi property]\label{cq}

Let $(M,\omega)$ be a closed symplectic manifold and $a$ be an idempotent of ${QH_\ast(M,\omega)}$.
For an open subset $U$ of $M$ satisfying the bounded spectrum condition with respect to $a$, the followings hold.
\begin{itemize}
\item[(1)](Controlled quasi-additivity) There exists a constant $K$ depending only on $U$ such that \[|\mu_a(fg)-\mu_a(f)-\mu_a(g)|<K{\,}\min\{||f||_U,||g||_U\}\text{ for any }f,g\in\widetilde{\mathrm{Ham}}(M,\omega)\]
\item[(2)](Calabi property) For a Hamiltonian diffeomorphism $h$ generated by a Hamiltonian function $H$ with support in $U$, 
\[\mu_a(h)=\int_0^1\int_MH\omega^ndt.\]
\end{itemize}
\end{prop}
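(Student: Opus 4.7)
The plan is to establish the Calabi property (2) first, then bootstrap it into the controlled quasi-additivity (1) via the triangle inequality and Banyaga's fragmentation.

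For the Calabi property, let $H\in C^\infty(M\times[0,1])$ be supported in $U\times[0,1]$ and generate $h$. For each $l\in\mathbb{N}$, I would exhibit a Hamiltonian $H^{(l)}$ generating $h^l\in\widetilde{\mathrm{Ham}}(M,\omega)$ whose support is still contained in $U\times[0,1]$; this uses that the path $\{h_t\}$ preserves $U$, so that the $l$-fold concatenation (reparametrized and smoothed inside $U$) stays supported in $U$, and a direct check gives $\int_0^1\!\int_M H^{(l)}\,\omega^n\,dt=l\int_0^1\!\int_M H\,\omega^n\,dt$. The bounded spectrum condition yields $|c(a,H^{(l)})|\leq E$, and the Hamiltonian shift property gives
\[
c(a,h^l)=c(a,H^{(l)})-\frac{l}{\mathrm{Vol}(M,\omega)}\int_0^1\!\!\int_M H\,\omega^n\,dt.
\]
Dividing by $l$, sending $l\to\infty$, and multiplying by $-\mathrm{Vol}(M,\omega)$ produces $\mu_a(h)=\int_0^1\!\int_M H\,\omega^n\,dt$.

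For the controlled quasi-additivity, the core step is a single-factor bound: for every $f\in\widetilde{\mathrm{Ham}}(M,\omega)$ and every $\theta\in\widetilde{\mathrm{Ham}}_V(M,\omega)$ with $V$ satisfying the bounded spectrum condition with the same constant $E$, I would show $|\mu_a(f\theta)-\mu_a(f)-\mu_a(\theta)|\leq K_0$ for a constant $K_0$ depending only on $E$ and $\mathrm{Vol}(M,\omega)$. The identity $(f\theta)^l=f^l\cdot\prod_{j=0}^{l-1}(f^{-j}\theta f^j)$, iterated triangle inequalities (using $a\ast a=a$), and symplectic invariance $c(a,f^{-j}\theta f^j)=c(a,\theta)$ give
\[
c(a,f^l)-l\cdot c(a,\theta^{-1})\;\leq\;c(a,(f\theta)^l)\;\leq\;c(a,f^l)+l\cdot c(a,\theta),
\]
so after dividing by $l$, sending $l\to\infty$, and multiplying by $-\mathrm{Vol}(M,\omega)$, I obtain $\mu_a(f)-\mathrm{Vol}(M,\omega)\,c(a,\theta)\leq\mu_a(f\theta)\leq\mu_a(f)+\mathrm{Vol}(M,\omega)\,c(a,\theta^{-1})$. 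Applying part (2) to $\theta$ and $\theta^{-1}$, together with $|c(a,H)|\leq E$ for their generators and the Hamiltonian shift, shows $|\mathrm{Vol}(M,\omega)\,c(a,\theta)+\mu_a(\theta)|\leq E\cdot\mathrm{Vol}(M,\omega)$ and the analogous estimate for $\theta^{-1}$ (using homogeneity $\mu_a(\theta^{-1})=-\mu_a(\theta)$), which gives the single-factor bound.

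For general $g$, use Banyaga's fragmentation to write $g=\prod_{i=1}^k\psi_i\theta_i\psi_i^{-1}$ with $\theta_i\in\widetilde{\mathrm{Ham}}_U$ and $k=\|g\|_U$. Each conjugate $\psi_i\theta_i\psi_i^{-1}$ lies in $\widetilde{\mathrm{Ham}}_{\bar\psi_i(U)}$, and symplectic invariance of $c(a,\cdot)$ implies that $\bar\psi_i(U)$ satisfies the bounded spectrum condition with the same constant $E$. Applying the single-factor bound iteratively to peel off the factors from $fg$, and again to compare $\mu_a(g)$ with $\sum_i\mu_a(\theta_i)$ (using conjugation invariance $\mu_a(\psi\theta\psi^{-1})=\mu_a(\theta)$), I obtain $|\mu_a(fg)-\mu_a(f)-\mu_a(g)|\leq K\|g\|_U$ for a constant $K$ depending only on $U$. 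Fragmenting $f$ instead gives the symmetric bound, hence by $\min\{\|f\|_U,\|g\|_U\}$. The main technical obstacle is producing the iterated Hamiltonian $H^{(l)}$ with support genuinely in $U\times[0,1]$ (the smoothing must be done without leaving $U$) and verifying that the constant $E$ in the bounded spectrum condition transfers cleanly under symplectomorphisms to $\bar\psi_i(U)$.
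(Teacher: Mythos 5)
Your proof is correct and reaches the same conclusion, but it is organized quite differently from the paper's argument. The paper proves (1) first, via Lemma~\ref{fragm=1}, which is a bound on the spectral invariant itself: for $\|g\|_U=1$ one has $|c(a,fg)-c(a,f)|<E$, proved directly from the triangle inequality, symplectic invariance and the bounded spectrum condition, with no appeal to the Calabi property. It then uses the identity $(fg)^k=\prod_{i=0}^{k-1}(g^ifg^{-i})\cdot g^k$ to bound $|c(a,(fg)^k)-c(a,f^k)-c(a,g^k)|$ linearly in $k$, divides by $k$ and passes to the limit, and finishes the general case by induction on $\|f\|_U$; part (2) is proved afterwards and independently, simply by observing that the scaled autonomous Hamiltonian $kH$ still has support in $U$ so that $|c(a,kH)|\leq E$ for all $k$, forcing the correction term to vanish in the limit. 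You reverse the logical order: you establish the Calabi property first (via an $l$-fold concatenated Hamiltonian $H^{(l)}$, which is a bit more work than the paper's $kH$ but also handles time-dependent generators cleanly) and then feed it into the single-factor estimate, producing a bound directly on $|\mu_a(f\theta)-\mu_a(f)-\mu_a(\theta)|$ rather than first on $c(a,\cdot)$. Both routes work; yours makes (1) depend on (2), which is fine since (2) is independent, and it has the pedagogical advantage of isolating the Calabi computation once and for all. The paper's route has the advantage that Lemma~\ref{fragm=1} is a self-contained statement about $c(a,\cdot)$ that does not presuppose the Calabi formula, and the argument keeps the normalized and unnormalized spectral invariants more cleanly separated until the final limit. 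Your closing concerns (constructing $H^{(l)}$ supported inside $U$, transferring the bounded spectrum constant to $\bar\psi_i(U)$) are legitimate but standard: the concatenated flow is the identity outside $\bigcup_t\operatorname{supp}H_t\subset U$ so the reparametrized Hamiltonian stays supported in $U$, and symplectic invariance of $c(a,\cdot)$ under $\mathrm{Symp}_0\supset\mathrm{Ham}$ gives the transfer with the same constant $E$ -- the paper handles this the same way inside the proof of Lemma~\ref{fragm=1} (via $c(a,\psi\theta\psi^{-1})=c(a,\theta)$), so this is not a genuine gap.
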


\noindent
\begin{lem}\label{fragm=1}
For an idempotent $a$ of $QH_\ast(M,\omega)$,
there exists a constant $E>0$ such that for any $f$ and $g\in\widetilde{\mathrm{Ham}}(M,\omega)$ with $||g||_U=1$,

\[|c(a,fg)-c(a,f)|<E.\]
\end{lem}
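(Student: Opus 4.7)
The plan is to reduce the estimate, via the fragmentation decomposition and conjugation invariance, to a uniform bound on $c(a, \theta)$ for $\theta \in \widetilde{\mathrm{Ham}}_U(M, \omega)$, and then to apply the bounded spectrum condition. Since $a$ is idempotent, $a \ast a = a$, so the triangle inequality (property (7)) applied to $fg$ and to $f = (fg) \cdot g^{-1}$ yields
\[
-c(a, g^{-1}) \,\leq\, c(a, fg) - c(a, f) \,\leq\, c(a, g),
\]
and it therefore suffices to bound both $c(a, g)$ and $c(a, g^{-1})$ by a universal constant depending only on $U$ and $a$.

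Since $||g||_U = 1$, one can write $g = \psi \theta \psi^{-1}$ with $\theta \in \widetilde{\mathrm{Ham}}_U(M, \omega)$ and $\psi \in \widetilde{\mathrm{Ham}}(M, \omega)$, so that $g^{-1} = \psi \theta^{-1} \psi^{-1}$. Since conjugation by a symplectomorphism preserves the normalization of a Hamiltonian function, the symplectic invariance (property (5)) gives $c(a, g) = c(a, \theta)$ and $c(a, g^{-1}) = c(a, \theta^{-1})$. Now choose Hamiltonian functions $H$ and $\bar H$ with support in $U \times [0, 1]$ generating $\theta$ and $\theta^{-1}$ respectively; by the bounded spectrum condition, $|c(a, H)|, |c(a, \bar H)| \leq E_0$ with $E_0$ the associated constant.

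The main obstacle is the last translation, from the spectral invariants of the non-normalized Hamiltonians $H, \bar H$ supported in $U$ to those of the group elements $\theta, \theta^{-1}$, which by definition use normalized Hamiltonians. By the Hamiltonian shift property (2) these differ by the time-integral $\int_0^1 \int_M H_t\, \omega^n\, dt / \mathrm{Vol}(M, \omega)$, a Calabi-type quantity. To absorb this into a uniform constant I would apply the triangle inequality already at the level of the non-normalized Hamiltonians generating $fg$ and $f$, so that the same shift appears on both sides of $c(a, fg) - c(a, f)$ and cancels; the resulting error is controlled by $c(a, H) + c(a, \bar H)$, which is bounded by $2E_0$, and taking $E$ slightly larger yields the desired estimate.
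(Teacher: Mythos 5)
Your overall strategy coincides with the paper's: use the triangle inequality (with $a\ast a = a$) to squeeze $c(a, fg) - c(a, f)$ between $-c(a, g^{-1})$ and $c(a, g)$, then invoke the fragmentation decomposition $g = \psi\theta\psi^{-1}$ together with symplectic invariance to reduce to bounding $c(a,\theta)$ and $c(a,\theta^{-1})$ for $\theta \in \widetilde{\mathrm{Ham}}_U(M,\omega)$, and finally appeal to the bounded spectrum condition. The paper does exactly this, very tersely: it simply asserts that the bounded spectrum condition yields $|c(a,h)| \leq E$ for all $h \in \widetilde{\mathrm{Ham}}_U(M,\omega)$ and concludes.

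Where you depart from the paper is in noticing that this last step is not a tautology: the bounded spectrum condition bounds $c(a,F)$ for Hamiltonians $F$ \emph{supported in $U$} (hence not normalized unless $F \equiv 0$), whereas $c(a,\cdot)$ on $\widetilde{\mathrm{Ham}}(M,\omega)$ is defined via \emph{normalized} Hamiltonians, and by the Hamiltonian shift property the two quantities differ by $\frac{1}{\mathrm{Vol}(M,\omega)}\int_0^1\int_M G_t\,\omega^n\,dt$. This is a genuine concern and you deserve credit for raising it; the paper passes over it silently.

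However, your proposed repair does not close the gap. You suggest applying the triangle inequality to non-normalized Hamiltonians generating $fg$ and $f$ so that ``the same shift appears on both sides and cancels.'' It does not cancel. Take $F$ normalized generating $f$ and $G' = G\circ\psi^{-1}$ generating $g$, with $G$ supported in $U$; then $F\sharp G'$ generates $fg$, and since $\int_M (F\sharp G')_t\,\omega^n = \int_M G_t\,\omega^n$ while $\int_M F_t\,\omega^n = 0$, the normalization shifts for $fg$ and for $f$ differ by exactly $\frac{1}{\mathrm{Vol}(M,\omega)}\int_0^1\int_M G_t\,\omega^n\,dt$. This Calabi-type term is the very quantity you were trying to absorb, and it is not controlled by the bounded spectrum condition: replacing $G$ by $kG$ (equivalently $\theta$ by $\theta^k$, which still has $||\theta^k||_U = 1$) multiplies it by $k$ while $c(a,kG)$ stays bounded. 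So the step you flagged as ``the main obstacle'' remains an obstacle after your fix, exactly as it does in the paper's one-line assertion. A correct treatment would have to either reformulate the lemma with a Calabi correction, or argue from the bounded spectrum condition combined with how the lemma is actually consumed in the proof of Proposition \ref{cq}(1) (where only the $\mu_a$-defect is needed and the Calabi contributions are homomorphic and cancel in the limit).
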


\begin{proof}

Since $U$ satisfies the bounded spectrum condition, there is a positive real number $E$ such that
\[|c(a,h)|\leq E\text{ for every }h\in\widetilde{\mathrm{Ham}}_U(M,\omega).\]
By the triangle inequality, $c(a,fg){\leq}c(a,f)+c(a,g)$ and $c(a,f){\leq}c(a,fg)+c(a,g^{-1})$.
Assumptions $||g||_U=1$ and the symplectic invariance imply that $-E{\leq}c(a,g){\leq}E$ and $-E{\leq}c(a,g^{-1}){\leq}E$.
Thus
\[-E{\leq}c(a,g^{-1}){\leq}c(a,fg)-c(a,f){\leq}c(a,g){\leq}E.\]
\end{proof}

\begin{proof}
[Proof of Proposition \ref{cq}]

First, we prove (1). Our argument is based on the proof of Theorem 7.1 in \cite{EP06}.
Take any element $f\in \widetilde{\mathrm{Ham}}(M,\omega)$ and represent it as $f=f_1{\dots}f_m$ with $||f_i||_U=1$ for all $i$.
We claim that
\[|\mu_a(fg)-\mu_a(f)-\mu_a(g)|{\leq}2E(2m-1)\mathrm{Vol}(M,\omega).\]
We prove the claim by induction on $m$.

For $m=1$, since $(fg)^k=\Pi_{i=0}^{i=k-1}(g^ifg^{-i}){\cdot}g^k$ and $||f||_U=1$, by Lemma \ref{fragm=1},
\[|c(a,(fg)^k)-\sum_{i=1}^kc(a,g^ifg^{-i})-c(a,g^k)| \leq Ek.\]
By the symplectic invariance,
\[|c(a,(fg)^k)-kc(a,f)-c(a,g^k)| \leq Ek.\]
Combining this with the inequality
\[|c(a,f^k)-kc(a,f)| \leq E(k-1),\]
we attain
\[|c(a,(fg)^k)-c(a,f^k)-c(a,g^k)|\leq E(2k-1)\]
Dividing by $k$ and passing to the limit as $k\to\infty$, we obtain
\begin{align*}
&|\mu_a(fg)-\mu_a(f)-\mu_a(g)|\\
& =\lim_{k\to\infty}\mathrm{Vol}(M,\omega) k^{-1}|c(a,(fg)^k)-c(a,f^k)-c(a,g^k)|\\
& \leq  2E\cdot\mathrm{Vol}(M,\omega).
\end{align*}

\noindent
Thus (1) is true for $m=1$.

Assume that the claim is true for $||f||_U=m$. For $f$ such that $||f||_U=m+1$, we decompose $f=f_mf_1$, where $||f_m||_U=m$ and $||f_1||=1$.
Then
\[|\mu_a(f_mf_1g)-\mu_a(f_m)-\mu_a(f_1g)|{\leq}2E(2m-1)\mathrm{Vol}(M,\omega),\]
\[|\mu_a(f_1g)-\mu_a(f_1)-\mu_a(g)|{\leq}2E\mathrm{Vol}(M,\omega)\]
and
\[|\mu_a(f_1)+\mu_a(f_m)-\mu_a(f_mf_1)|{\leq}2E\mathrm{Vol}(M,\omega).\]
By these inequalities, 
\[|\mu_a(fg)-\mu_a(f)-\mu_a(g)|{\leq}2E(2m+1)\mathrm{Vol}(M,\omega).\]
This completes the proof of the controlled quasi-additivity (1).

Secondly, we prove (2).
Set $\lambda(t)=\dfrac{\int_MH_t\omega^n}{\mathrm{Vol}(M,\omega )}$.
Since the normalization of $kH_t$ is $kH_t-k\lambda(t)$, the definition of spectral invariants for elements of $\widetilde{\mathrm{Ham}}(M,\omega)$ and the Hamiltonian shift property imply,
\begin{align*}
\mu_a(h)=&-\mathrm{Vol}(M,\omega )\lim_{k\to\infty}\frac{c(a,h^k)}{k}\\
=&-\mathrm{Vol}(M,\omega )\lim_{k\to\infty}\frac{c(a,kH-k\lambda)}{k}\\
=&-\mathrm{Vol}(M,\omega )\lim_{k\to\infty}\frac{c(a,kH)}{k}+\mathrm{Vol}(M,\omega )\int_0^1\lambda(t)dt\\
=&-\mathrm{Vol}(M,\omega )\lim_{k\to\infty}\frac{c(a,kH)}{k}+\int_0^1\int_MH\omega^ndt.
\end{align*}
The bounded spectrum condition implies that there exists a constant $E>0$ such that $|c(a,kH)|<E$ for any $k$. Thus
\[\mathrm{Vol}(M,\omega )\cdot\lim_{k\to\infty}\frac{c(a,kH)}{k}=0.\]
Hence Proposition \ref{cq}(2) is proved.
\end{proof}

\begin{lem}\label{kahou2}

Under the assumption of Proposition \ref{cq}, for the Hamiltonian diffeomorphism $f$ generated by a Hamiltonian function with support in $U$ and a Hamiltonian diffeomorphism $g$ satisfying $fg=gf$
\[\mu_a(fg)=\mu_a(f)+\mu_a(g).\]
\end{lem}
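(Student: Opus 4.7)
The plan is to upgrade the controlled quasi-additivity of Proposition \ref{cq}(1) to an exact identity by exploiting the commutation $fg = gf$, which gives $(fg)^k = f^k g^k$. The additive defect of $\mu_a$ then grows only boundedly on the right-hand side but grows exactly linearly in $k$ on the left, so dividing by $k$ and sending $k \to \infty$ forces the defect to vanish.

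First I would verify that $f^k \in \widetilde{\mathrm{Ham}}_U(M,\omega)$ for every positive integer $k$. If $F_t$ is a Hamiltonian function with support in $U$ generating a path representing $f$ in $\widetilde{\mathrm{Ham}}(M,\omega)$, then concatenating $k$ time-reparametrized copies of $F$ produces a Hamiltonian still supported in $U$ and generating $f^k$. Consequently $\|f^k\|_U \leq 1$ for every $k$. I would also record the homogeneity $\mu_a(h^k) = k \mu_a(h)$, which follows immediately from the defining limit
$$\mu_a(h) = -\mathrm{Vol}(M,\omega) \cdot \lim_{l\to\infty}\frac{c(a,h^l)}{l}$$
by restricting to the subsequence of indices $kl$.

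Applying Proposition \ref{cq}(1) to the product $f^k \cdot g^k$ with the uniform bound $\|f^k\|_U \leq 1$ yields a constant $K$, independent of $k$, such that
$$|\mu_a(f^k g^k) - \mu_a(f^k) - \mu_a(g^k)| \leq K.$$
Using $fg = gf$ to rewrite $f^k g^k = (fg)^k$ and invoking homogeneity on all three terms, the left-hand side equals $k \cdot |\mu_a(fg) - \mu_a(f) - \mu_a(g)|$. Dividing by $k$ and letting $k \to \infty$ gives $\mu_a(fg) = \mu_a(f) + \mu_a(g)$.

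The only mildly delicate point is the bound $\|f^k\|_U \leq 1$, which must be verified using representatives in $\widetilde{\mathrm{Ham}}(M,\omega)$ rather than in $\mathrm{Ham}(M,\omega)$; this amounts to a standard time-reparametrization argument and is not a serious obstacle.
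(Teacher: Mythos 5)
Your proposal is correct and follows essentially the same route as the paper: both exploit $\|f^k\|_U \leq 1$, the semi-homogeneity $\mu_a(h^k)=k\mu_a(h)$, the commutation $(fg)^k = f^k g^k$, and the uniform bound from controlled quasi-additivity, then divide by $k$ and pass to the limit.
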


\begin{proof}
Note that $||{f}^n||_U\leq1$ for all $n\in\mathbb{Z}_{\geq{0}}$. We use the semi-homogeneity and the controlled quasi-additivity of $\mu_a$.
For any $n\in\mathbb{Z}$,
\begin{align*}
n\lvert\mu_a(fg)-\mu_a(f)-\mu_a(g)\rvert
&=\lvert\mu_a((fg)^n)-\mu_a(f^n)-\mu_a(g^n)\rvert\\
&\leq\lvert\mu_a(f^ng^n)-\mu_a(f^n)-\mu_a(g^n)\rvert\\
&<K{\,}\min\{||f^n||_U,||g^n||_U\}\\
&<K,\\
\end{align*}
where $K$ is the constant in the definition of Proposition \ref{cq} (1).
Therefore $\mu_a(fg)=\mu_a(f)+\mu_a(g)$.
\end{proof}

\begin{proof}[Proof of Proposition \ref{bsc is scc}]

Assume that an open subset $U$ of $M$ satisfies the bounded spectrum condition with respect to an idempotent $a\in QH_\ast (M,\omega)$.
Take Hamiltonian functions $F\in C^\infty(M)$ and $G\in C^\infty(U)$ such that $\{F,G\}=0$.
Since $\{F,G\}=0$, a function $F+G$ generates the Hamiltonian diffeomorphism $fg=gf$ where $f$ and $g$ are the Hamiltonian diffeomorphisms generated by $F$ and $G$, respectively.

By Lemma \ref{kahou2} and Proposition \ref{cq}(2),
\[ \zeta_a(F+G)=\mathrm{Vol}(M,\omega )^{-1} (\int_M(F+G)\omega^n-\mu_a(fg))\]
\[=\mathrm{Vol}(M,\omega )^{-1} (\int_MF\omega^n+\int_MG\omega^n-\mu_a(f)-\mu_a(g))=\zeta_a(F).\]
Thus Proposition \ref{bsc is scc} holds.
\end{proof}

\subsection{Proof of Theorem \ref{generalized stem sheavy}}\label{generalize stem}

First we prove the following elementary lemma.

\begin{lem}\label{thin}

Let $X$ be a closed subset of a closed symplectic manifold $(M,\omega)$. Take an idempotent $a$ of $QH_\ast(M,\omega)$.
If $\bar{U}$ is $a$-superheavy for any open neighborhood $U$ of $X$, then $X$ is also $a$-superheavy.

\end{lem}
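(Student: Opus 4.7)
The plan is to unpack the superheaviness inequality $\zeta_a(H) \leq \sup_X H$ directly, and to construct for every $\varepsilon > 0$ a specific open neighborhood $U_\varepsilon$ of $X$ on which $H$ is controlled by $\sup_X H + \varepsilon$. Fix an arbitrary $H \in C^\infty(M)$ and set $c = \sup_X H$; this is finite since $X$ is closed in the compact manifold $M$.

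For each $\varepsilon > 0$, I would define $U_\varepsilon = \{x \in M : H(x) < c + \varepsilon\}$. This is open by continuity of $H$, and it contains $X$ by the definition of $c$. By continuity, its closure satisfies $\bar{U}_\varepsilon \subseteq \{x \in M : H(x) \leq c + \varepsilon\}$, so
\[
\sup_{\bar{U}_\varepsilon} H \;\leq\; c + \varepsilon \;=\; \sup_X H + \varepsilon.
\]

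Now I invoke the hypothesis: since $U_\varepsilon$ is an open neighborhood of $X$, the closure $\bar{U}_\varepsilon$ is $a$-superheavy, meaning $\zeta_a(H) \leq \sup_{\bar{U}_\varepsilon} H$. Combining with the previous inequality,
\[
\zeta_a(H) \;\leq\; \sup_X H + \varepsilon.
\]
Letting $\varepsilon \downarrow 0$ gives $\zeta_a(H) \leq \sup_X H$. Since $H$ was arbitrary, $X$ satisfies the defining inequality of $a$-superheaviness.

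There is no substantive obstacle here; the only care needed is to check that $\bar U_\varepsilon$, rather than just $U_\varepsilon$, is the set on which $H$ is controlled, which follows from continuity of $H$. The rest is a direct application of the hypothesis together with the definition of superheaviness.
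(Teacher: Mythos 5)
Your proof is correct and takes essentially the same route as the paper: fix $H$, produce for each $\varepsilon>0$ an open neighborhood $U$ of $X$ with $\sup_{\bar U}H\le\sup_X H+\varepsilon$, apply the hypothesis to $\bar U$, and let $\varepsilon\downarrow 0$. The only difference is that you exhibit the neighborhood explicitly as the sublevel set $\{H<\sup_X H+\varepsilon\}$, whereas the paper simply asserts its existence; this is a harmless elaboration.
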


\begin{proof}

Fix a Hamiltonian function $F\in{C}^\infty(M)$.
For any ${\epsilon}>0$, there exists an open neighborhood $U$ of $X$ such that $\sup_{\bar{U}}F \leq \sup_XF+\epsilon$.
Since the assumption implies that $\bar{U}$ is $a$-superheavy, $\zeta_a(F) \leq \sup_{\bar{U}}F \leq \sup_XF+\epsilon$.
Since ${\epsilon}$ is any positive real number, $\zeta_a(F) \leq \sup_XF$.
Thus $X$ is $a$-superheavy.

\end{proof}

To prove Theorem \ref{generalized stem sheavy}, we use the following proposition.

\begin{prop}[{\cite{EP09} Proposition 4.1}]\label{EP fundamental property}
A closed subset $X$ of ${M}$ is heavy if and only if, for any $H\in{C}^\infty(M)$ with $H|_X=0$ and $H\leq{0}$, one has ${\zeta_a}(H)=0$. A closed subset $X$ of ${M}$ is $a$-superheavy if and only if, for any $H\in{C}^\infty(M)$ with $H|_X=0$ and $H\geq{0}$, one has ${\zeta_a}(H)=0$.
\end{prop}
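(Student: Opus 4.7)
The plan is to derive both characterizations from the two basic properties of $\zeta_a$ inherited from Proposition \ref{spec inv}: monotonicity ($F\le G$ implies $\zeta_a(F)\le\zeta_a(G)$) and the constant-shift identity $\zeta_a(F+c)=\zeta_a(F)+c$ for $c\in\mathbb{R}$. In particular $\zeta_a(0)=0$, obtained by noting $c(a,0)/l\to 0$ as $l\to\infty$. I would handle the heavy statement in full and then remark that the superheavy statement is proved by reversing signs throughout.

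For the forward implication in the heavy case, suppose $X$ is $a$-heavy and $H\in C^\infty(M)$ satisfies $H|_X=0$, $H\le 0$. Heaviness gives $\zeta_a(H)\ge\inf_X H=0$, while monotonicity and $\zeta_a(0)=0$ give $\zeta_a(H)\le 0$; hence $\zeta_a(H)=0$.

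For the reverse implication, assume $\zeta_a(H)=0$ whenever $H|_X=0$ and $H\le0$. Fix an arbitrary $F\in C^\infty(M)$ and set $c=\inf_X F$. The task is to show $\zeta_a(F)\ge c$. For each $\epsilon>0$, I would choose a smooth nondecreasing function $\phi_\epsilon\colon\mathbb{R}\to\mathbb{R}$ satisfying $\phi_\epsilon(t)\le 0$ for all $t$, $\phi_\epsilon(t)=0$ for $t\ge 0$, and $\phi_\epsilon(t)\ge t-\epsilon$ for all $t$ (a smoothed truncation of $\min(t,0)$). Then $H:=\phi_\epsilon(F-c)$ is smooth, $H\le 0$ everywhere, and $H|_X=0$ since $F-c\ge0$ on $X$. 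By hypothesis $\zeta_a(H)=0$. On the other hand $H\ge (F-c)-\epsilon$, so monotonicity together with the shift property yield
\[
0=\zeta_a(H)\ge\zeta_a(F-c-\epsilon)=\zeta_a(F)-c-\epsilon.
\]
Letting $\epsilon\to 0$ gives $\zeta_a(F)\ge c=\inf_X F$, which is heaviness.

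The superheavy characterization is the mirror image: the forward direction combines $\sup_X H=0$ with monotonicity $\zeta_a(H)\ge\zeta_a(0)=0$, and the reverse direction sets $c=\sup_X F$ and applies the hypothesis to a smoothed truncation $\psi_\epsilon(F-c)$ with $\psi_\epsilon(t)=0$ for $t\le 0$, $\psi_\epsilon(t)\ge 0$, and $\psi_\epsilon(t)\le t+\epsilon$, obtaining $\zeta_a(F)\le c+\epsilon$ for every $\epsilon>0$. The only genuinely nontrivial step is the smooth truncation construction; once that is in place, the rest is a direct manipulation of monotonicity and the shift property.
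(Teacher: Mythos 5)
Your forward implications are fine, and your overall strategy (reduce everything to monotonicity, the constant-shift identity $\zeta_a(F+c)=\zeta_a(F)+c$, and a smooth truncation) is the right one --- note the paper itself does not reprove this statement but simply cites Proposition 4.1 of Entov--Polterovich, whose proof runs along exactly these lines. However, both of your reverse implications are broken, and in the same two ways. First, the truncation you specify in the heavy case does not exist: the conditions $\phi_\epsilon\le 0$, $\phi_\epsilon(t)=0$ for $t\ge 0$, and $\phi_\epsilon(t)\ge t-\epsilon$ for \emph{all} $t$ are mutually contradictory as soon as $t>\epsilon$. More to the point, the global inequality $H\ge F-c-\epsilon$ that your monotonicity step requires would force $F\le c+\epsilon$ everywhere on $M$ (since $H\le 0$), which fails for a general $F$. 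Second, even granting the displayed chain, $0=\zeta_a(H)\ge\zeta_a(F)-c-\epsilon$ rearranges to $\zeta_a(F)\le c+\epsilon$, an \emph{upper} bound, whereas heaviness demands the lower bound $\zeta_a(F)\ge\inf_XF$; your final sentence silently flips the inequality. The superheavy paragraph has the mirror-image defects: $\psi_\epsilon\ge 0$ is incompatible with $\psi_\epsilon(t)\le t+\epsilon$ for $t<-\epsilon$, and the comparison you set up would produce $\zeta_a(F)\ge c-\epsilon$ where an upper bound is needed.

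The repair is to swap the two comparison inequalities between the two cases. For heaviness, with $c=\inf_XF$, choose $\phi_\epsilon$ smooth, nondecreasing, with $\phi_\epsilon\le 0$, $\phi_\epsilon(t)=0$ for $t\ge-\epsilon/2$, and $\phi_\epsilon(t)\le t+\epsilon$ for all $t$; these conditions are consistent (smooth $\min(t+\epsilon/2,0)$ on the interval $[-\epsilon,-\epsilon/2]$, where there is room because the $\epsilon$-shift removes the derivative obstruction at $0$). Then $H:=\phi_\epsilon(F-c)$ satisfies $H\le 0$, $H|_X=0$ (as $F-c\ge 0$ on $X$), and $H\le F-c+\epsilon$, so
\[
0=\zeta_a(H)\le\zeta_a(F-c+\epsilon)=\zeta_a(F)-c+\epsilon,
\]
which gives $\zeta_a(F)\ge c-\epsilon$ and hence heaviness as $\epsilon\to 0$. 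For superheaviness, with $c=\sup_XF$, take $\psi_\epsilon\ge 0$ vanishing for $t\le\epsilon/2$ and satisfying $\psi_\epsilon(t)\ge t-\epsilon$; then $H=\psi_\epsilon(F-c)$ is an admissible test function with $H\ge F-c-\epsilon$, and $0=\zeta_a(H)\ge\zeta_a(F)-c-\epsilon$ gives $\zeta_a(F)\le c+\epsilon$. With these corrections your argument is complete.
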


\begin{proof}[Proof of Theorem \ref{generalized stem sheavy}]

Let $\mathbb{A}$ be a finite-dimensional Poisson-commutative subspace of $C^\infty(M)$ and $\Phi\colon M\to\mathbb{A}^\ast$ be its moment map and let $X=\Phi^{-1}(p)$ be an $a$-stem of $\mathbb{A}$.
Take an open neighborhood $V$ of $X$.  Take a Hamiltonian function $\hat{H}$ such that $\hat{H}|_V=0$ and $\hat{H}\geq 0$. By Lemma \ref{thin} and Proposition \ref{EP fundamental property}, it is sufficient to prove $\zeta_a(\hat{H})=0$.

One can find an open neighborhood $U$ of $p$ and a function $H\in C^\infty(\mathbb{A}^\ast)$ with $H\geq 0$ and $H|_U=0$ such that $\hat{H}\leq \Phi^\ast H$.
Then by monotonicity
\[0\leq \zeta_a(\hat{H})\leq \zeta_a(\Phi^\ast H).\]
Thus it is sufficient to prove $\zeta_a(\Phi^\ast H)=0$.

By the definition of an $a$-stem, we can choose an open covering $\{U_0,U_1,\cdots,U_N\}$ of $\Phi(M)$ such that $U_0=U$, and all $\Phi^{-1}(U_i)$ is strongly $a$-null for $i \geq 1$. Let $\rho_i{\colon}\Phi(M)\to\mathbb{R}$, $i=0,\cdots,N$, be a partition of unity subordinated to the covering $\{U_i\}$. Then
\[\Phi^\ast H=\sum_{i=0}^N\Phi^\ast H\,\rho_i=\Phi^\ast H\,\rho_0+\sum_{i=1}^N\Phi^\ast H\,\rho_i.\]
Since all $\mathrm{supp}(\Phi^\ast H\,\rho_i)$ is strongly $a$-null for $i\geq 1$ and $H|_U=0$,
\[\zeta_a(\Phi^\ast H)=\zeta_a(\sum_{i=0}^N\Phi^\ast H\, \rho_i)=\zeta_a(\Phi^\ast H\,\rho_0)=0.\]
\end{proof}

\section{Proof of Theorem \ref {torus} and Corollary \ref{p-torus}} \label{exam}

\begin{proof}[Proof of Theorem \ref{torus}]

Consider a moment map $\Phi\in C^\infty(\mathbb{T}^2)$ such that $\Phi(x)=0$ if $x\in M \cup L$ and $\Phi(x)>0$ if $x\notin M \cup L$. Take a real number $\epsilon\neq 0$.
Then there exist a positive number $\delta$ and an open neighborhood $U$ of $\Phi^{-1}(\epsilon)$ such that $U\subset (\delta,1-\delta)\times(\delta,1-\delta)$.
Consider a Hamiltonian function $H\in{C}^\infty(\mathbb{T}^2\times[0,1])$ such that $H((p,q),t)=p$ for any $p \in [\delta,1-\delta]$.

Define the free loop $\gamma\colon \mathbb{S}^1\to \mathbb{T}^2$ by $\gamma(t)=(0,t)$. Let $\alpha\in [\mathbb{S}^1,\mathbb{T}^2]$ be the homotopy class of free loops represented by $\gamma$.
Then $\alpha$, $U$ and $H$ satisfy the assumptions of Proposition \ref{spectral main theorem}, hence $U$ satisfies the bounded spectrum condition with respect to any idempotent $a\in QH_\ast(\mathbb{T}^2,\omega_{\mathbb{T}^2})$ and is strongly $a$-null by Theorem \ref{bsc is scc}. Thus $M \cup L$ is an $a$-stem, hence it is $a$-superheavy.
\end{proof}

Though the above example cannot be displaced by homeomorphisms, it gives rise to a nontrivial non-displaceable example by using the following theorem.

\noindent
\begin{thm}[{\cite{EP09} Theorem 1.7}]\label{product of heavy}

Let $(M_1,{\omega}_1)$ and $(M_2,{\omega}_2)$ be closed symplectic manifolds. Take non-zero idempotents $a_1$, $a_2$ of $QH_\ast(M_1)$, $QH_\ast(M_1)$, respectively.
Assume that for $i=1,2$, $X_i$ be an $a_i$-heavy (respectively, $a_i$-superheavy) subset.
Then the product $X_1{\times}X_2$ is $a_1{\otimes}a_2$-heavy (respectively, $a_1{\otimes}a_2$-superheavy) subset of $(M_1{\times}M_2,\omega_1 \oplus \omega_2)$ with respect to the idempotent  of $QH_\ast(M_1{\times}M_2)$.
\end{thm}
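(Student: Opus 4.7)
The plan is to establish a Künneth-type product formula for spectral invariants and combine it with Proposition~\ref{EP fundamental property} to transfer (super)heaviness from the factors to the product. I will focus on the superheavy case; the heavy case is parallel with suprema replaced by infima. The first ingredient is a product formula for spectral invariants on split Hamiltonians. Under the natural isomorphism $QH_{\ast}(M_1 \times M_2, \omega_1 \oplus \omega_2) \cong QH_{\ast}(M_1) \otimes_{\Lambda} QH_{\ast}(M_2)$ and for the split Hamiltonian $(H_1 \oplus H_2)(x_1,x_2,t) := H_1(x_1,t) + H_2(x_2,t)$, the pair-of-pants product on Hamiltonian Floer complexes with a product almost-complex structure yields
\[
c\bigl(a_1 \otimes a_2,\; H_1 \oplus H_2\bigr) = c(a_1, H_1) + c(a_2, H_2).
\]
Passing $l \to \infty$ in the definition of $\zeta$ promotes this to $\zeta_{a_1 \otimes a_2}(F_1 \oplus F_2) = \zeta_{a_1}(F_1) + \zeta_{a_2}(F_2)$ for autonomous $F_i \in C^\infty(M_i)$.

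Next, by Proposition~\ref{EP fundamental property} applied to the candidate superheavy set $X_1 \times X_2$, the claim reduces to showing $\zeta_{a_1 \otimes a_2}(F) = 0$ for every $F \in C^\infty(M_1 \times M_2)$ with $F \geq 0$ and $F|_{X_1 \times X_2} = 0$. The lower bound $\zeta_{a_1 \otimes a_2}(F) \geq 0$ is immediate from monotonicity and $\zeta(0) = 0$. For the upper bound, fix $x_1 \in X_1$: the slice $F(x_1, \cdot) \in C^\infty(M_2)$ is non-negative and vanishes on $X_2$, so $a_2$-superheaviness of $X_2$ together with Proposition~\ref{EP fundamental property} gives $\zeta_{a_2}(F(x_1, \cdot)) = 0$. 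Consequently, the auxiliary function $G(x_1) := \zeta_{a_2}(F(x_1, \cdot))$, which is Lipschitz on $M_1$ by the Lipschitz property of $\zeta_{a_2}$, is non-negative and vanishes on $X_1$.

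The final step is a fiber-wise comparison $\zeta_{a_1 \otimes a_2}(F) \leq \zeta_{a_1}(\tilde G_{\varepsilon})$, where $\tilde G_{\varepsilon} \in C^\infty(M_1)$ is a smoothing of $G + \varepsilon$ from above. I would cover $M_1$ by small open charts on which the slice-wise Hamiltonians $F(x_1, \cdot)$ vary little in $C^0$-norm, use a partition of unity to compare $F$ to a split Hamiltonian of the form $\tilde G_{\varepsilon} \oplus 0$, and combine the product formula of the first step with the monotonicity, Lipschitz and triangle inequalities of $c$ to obtain the comparison after letting $\varepsilon \to 0$. Applying $a_1$-superheaviness of $X_1$ to $\tilde G_{\varepsilon}$ via Proposition~\ref{EP fundamental property} then forces $\zeta_{a_1}(\tilde G_{\varepsilon}) \to 0$, so $\zeta_{a_1 \otimes a_2}(F) = 0$ and the theorem follows. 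The main obstacle is precisely this third step: the fiber quasi-state $G$ is only Lipschitz, and $F$ admits no pointwise domination by a split Hamiltonian expressed through $G$ (since $\zeta_{a_2}$ is not a sup), so the comparison must be realized at the Floer-theoretic level, and one must ensure that the approximation errors remain controlled uniformly in the asymptotic rescaling $l \to \infty$ used to define $\zeta$.
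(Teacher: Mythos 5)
Note first that the paper does not prove this statement; it is quoted as Theorem 1.7 of \cite{EP09}, so there is no internal argument to compare against. Assessing your proposal on its own merits: your first two ingredients are sound. The K\"unneth product formula $c(a_1\otimes a_2, H_1\oplus H_2) = c(a_1,H_1) + c(a_2,H_2)$ for split Hamiltonians is indeed the essential input, and reducing via Proposition \ref{EP fundamental property} to showing $\zeta_{a_1\otimes a_2}(F)=0$ for $F\geq0$ with $F|_{X_1\times X_2}=0$ is valid. But your third step has a genuine gap that you yourself flag: the claimed comparison $\zeta_{a_1\otimes a_2}(F)\leq\zeta_{a_1}(\tilde G_\varepsilon)$, where $G(x_1)=\zeta_{a_2}(F(x_1,\cdot))$, is asserted rather than proved, and it cannot be extracted from monotonicity and the split product formula alone, because $F$ is not pointwise dominated by any split function built out of $G$ (precisely because $\zeta_{a_2}$ is not a supremum). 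The partition-of-unity ``Floer-theoretic realization'' you allude to is the actual missing content, and the uniformity in the $l\to\infty$ rescaling is not addressed.

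The fiber quasi-state $G$ is in fact unnecessary; the obstruction can be sidestepped by sandwiching $F$ directly between split Hamiltonians determined only by the geometry of $X_1\times X_2$. Given $\varepsilon>0$, the open set $\{F<\varepsilon\}$ contains $X_1\times X_2$, so by compactness there are open $U_i\supset X_i$ with $U_1\times U_2\subset\{F<\varepsilon\}$. Choose $F_i\in C^\infty(M_i)$ with $F_i\geq0$, $F_i|_{X_i}=0$, and $F_i\geq\max F$ outside $U_i$. Then $F\leq\varepsilon+F_1\oplus F_2$ everywhere on $M_1\times M_2$: on $U_1\times U_2$ because $F<\varepsilon$ and $F_i\geq0$; off $U_1\times U_2$ because at least one $F_i$ dominates $\max F$. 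Monotonicity, the shift property and your product formula then give
\[
\zeta_{a_1\otimes a_2}(F)\leq\varepsilon+\zeta_{a_1}(F_1)+\zeta_{a_2}(F_2)\leq\varepsilon+\sup_{X_1}F_1+\sup_{X_2}F_2=\varepsilon,
\]
using $a_i$-superheaviness of $X_i$ in the middle inequality. Letting $\varepsilon\to0$ and combining with your lower bound $\zeta_{a_1\otimes a_2}(F)\geq0$ finishes the superheavy case; the heavy case is the mirror argument with infima and a split lower bound. This is essentially the argument of \cite{EP09} and it replaces your problematic third step while retaining your first two.
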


\begin{proof}[Proof of Corollary \ref{p-torus}]
By Example \ref{torus and cp^n}, Theorem \ref{torus} and Theorem \ref{product of heavy}, $C{\times}(M\cup{L})$ is $[\mathbb{C}P^n\times \mathbb{T}^2]$-superheavy subset of $(\mathbb{C}P^n\times \mathbb{T}^2,\omega_{FS}\oplus \omega_{\mathbb{T}^2})$. Thus Theorem \ref{non-displaceability} implies that there exists no symplectomorphism $f$ such that $C{\times}(M\cup{L})\cap f(C{\times}(M\cup{L}))=\emptyset$.
\end{proof}

\section{Generalization}\label{general}

In this section, we give a generalization of Theorem \ref{torus}.

\begin{thm}\label{torus2}
Let $(\Sigma_g,\omega)$ be a closed Riemannian surface with a symplectic (area) form $\omega$ and $e^0\cup e^1_1\cup\cdots\cup e^1_{2g}\cup e^2$ its CW-decomposition.
Then $e^1_1\cup\cdots\cup e^1_{2g}$ is a ``$[\Sigma_g]$-superheavy'' subset of $(\sigma_g,\omega)$,
\end{thm}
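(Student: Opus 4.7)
The plan is to imitate the proof of Theorem~\ref{torus}. Write $\overline{S}$ for the closed $1$-skeleton $e^0 \cup e^1_1 \cup \cdots \cup e^1_{2g}$; since $a$-superheaviness depends only on the closure of a set, it suffices to show $\overline{S}$ is $[\Sigma_g]$-superheavy. By Theorem~\ref{generalized stem sheavy} I only need to exhibit $\overline{S}$ as an $a$-stem for every non-trivial idempotent $a$ of $QH_\ast(\Sigma_g,\omega)$. To that end, pick a smooth $\Phi \in C^\infty(\Sigma_g)$ vanishing exactly on $\overline{S}$ and positive on the open $2$-cell $D := e^2$, and view it as the moment map of the one-dimensional Poisson-commutative subspace $\mathbb{A} := \mathbb{R}\cdot \Phi$. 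By Proposition~\ref{bsc is scc} the task reduces to showing: for each $\epsilon>0$ the level set $\Phi^{-1}(\epsilon)$ admits an open neighborhood $U_\epsilon$ satisfying the bounded spectrum condition with respect to every idempotent $a$.

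For each such $\epsilon$, I apply Proposition~\ref{spectral main theorem} with an $\epsilon$-dependent nontrivial class $\alpha_\epsilon \in [\mathbb{S}^1,\Sigma_g]$ and Hamiltonian $H_\epsilon$. Realize $\Sigma_g$ as the quotient of the standard $4g$-gon $P$ under the identifications $[a_1,b_1]\cdots[a_g,b_g]$, so that $\overline{S}$ is the image of $\partial P$ and $D = \operatorname{int}(P)$. I choose $\Phi$ so that each level set $\Phi^{-1}(\epsilon)$ in $D$ sits inside an embedded annular neighborhood $N_\epsilon$ of a non-contractible simple closed curve $\gamma_\epsilon \subset \Sigma_g$; concretely $\gamma_\epsilon$ can be obtained by cutting straight across $P$ between a pair of identified sides of the polygon. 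On $N_\epsilon \cong (\mathbb{R}/\mathbb{Z}) \times (-\eta,\eta)$ with Darboux coordinates $(\theta,s)$ in which $\omega = d\theta \wedge ds$, define $H_\epsilon$ to equal $s$ on the sub-annulus containing $\Phi^{-1}(\epsilon)$ and taper it smoothly to $0$ outside $N_\epsilon$, exactly the analogue of $H = p$ in the proof of Theorem~\ref{torus}. Its flow is translation in the $\theta$-direction; at time $1$ it is the identity on $U_\epsilon$, while each orbit through $U_\epsilon$ is a loop parallel to $\gamma_\epsilon$ and represents the nontrivial class $\alpha_\epsilon := [\gamma_\epsilon]$.

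Condition (3) of Proposition~\ref{spectral main theorem} is automatic: $\Phi^{-1}(\epsilon)$ is a simple closed curve bounding a disk in $D \subset \Sigma_g$, so the inclusion-induced map $\pi_1(U_\epsilon) \to \pi_1(\Sigma_g)$ is trivial and hence $\alpha_\epsilon \notin i_\ast([\mathbb{S}^1,U_\epsilon])$. Proposition~\ref{spectral main theorem} then gives the bounded spectrum condition for $U_\epsilon$, Proposition~\ref{bsc is scc} makes $\Phi^{-1}(\epsilon)$ strongly $a$-null, and Theorem~\ref{generalized stem sheavy} yields the conclusion.

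The hard part is the simultaneous construction of $\Phi$ and the family $\{(\gamma_\epsilon, N_\epsilon)\}_{\epsilon > 0}$. Level sets $\Phi^{-1}(\epsilon)$ for very small $\epsilon$ are long curves hugging $\overline{S}$, and they do not obviously fit inside a single embedded annulus about a non-contractible curve; this is where the argument requires more care than in the genus-one case, in which a single global Hamiltonian $H = p$ does the job for all $\epsilon$ at once. The intended resolution is to shape $\Phi$ adaptively so that near each 1-cell $e^1_i$ it depends essentially on a transverse coordinate to $e^1_i$, making the corresponding arc of every level set run parallel to $e^1_i$ and hence absorbable into a tubular neighborhood of a curve in the class $[a_i]$ or $[b_i]$; one then allows $\gamma_\epsilon$ (and thus $\alpha_\epsilon$) to vary with $\epsilon$ through these classes as the level set sweeps across different regions of $P$.
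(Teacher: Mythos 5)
Your high-level strategy is the right one: reduce to showing that each level set $\Phi^{-1}(\epsilon)$, $\epsilon\neq 0$, is strongly $a$-null by exhibiting the bounded spectrum condition, then invoke Proposition~\ref{spectral main theorem}, Proposition~\ref{bsc is scc} and Theorem~\ref{generalized stem sheavy}. Your verification of condition~(3) of Proposition~\ref{spectral main theorem} is also correct, since $U_\epsilon$ lies in the simply connected open $2$-cell. But the construction of $(H_\epsilon,\alpha_\epsilon)$ is precisely the heart of the matter, and your proposed resolution does not close the gap.

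For small $\epsilon$ the level set $\Phi^{-1}(\epsilon)$ is the boundary of a regular neighborhood of the whole one-skeleton $\overline{S}=e^1_1\cup\cdots\cup e^1_{2g}$: a single contractible simple closed curve that travels close to every $e^1_i$. No embedded annulus around a single non-contractible simple closed curve $\gamma_\epsilon$ can contain it, and covering it by several such annuli while ``letting $\gamma_\epsilon$ vary'' does not help: Proposition~\ref{spectral main theorem} requires one Hamiltonian $H$ whose time-$1$ flow is the identity on all of $U_\epsilon$ and whose orbits through every point of $U_\epsilon$ represent one fixed nontrivial class $\alpha$. A Hamiltonian supported in a disjoint union of tubular neighborhoods with different core classes cannot satisfy both requirements simultaneously on a connected $\Phi^{-1}(\epsilon)$. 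The genus-one proof does not actually rely on a tubular neighborhood of an embedded curve either, so the analogy you should be pursuing is a different one.

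The paper's construction avoids embedded annuli altogether. Cut $\Sigma_g$ open along $\overline{S}$ to obtain the $4g$-gon $\tilde\Sigma_g$ with quotient map $\pi\colon\tilde\Sigma_g\to\Sigma_g$. Choose an area-preserving diffeomorphism $f\colon [0,1]\times[0,A]\to\tilde\Sigma_g$ (with $A=\int_{\Sigma_g}\omega$) so that the horizontal edges $[0,1]\times\{0\}$ and $[0,1]\times\{A\}$ are carried to the two boundary edges of the polygon that get glued to $e^1_1$, and so that $\pi(f(t,0))=\pi(f(t,A))$ for all $t$. Set $\hat H(p,q)=Ap$ on $[\delta,1-\delta]\times[0,A]$ and taper it to $0$ near $p\in\{0,1\}$; the gluing condition makes $\hat H$ descend to a smooth Hamiltonian $H$ on $\Sigma_g$. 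Its time-$1$ flow is the identity on the image of $(\delta,1-\delta)\times[0,A]$, which contains $U_\epsilon$ once $\delta$ is chosen small, and every orbit there is a closed loop crossing $e^1_1$ exactly once and otherwise lying in the open $2$-cell; all such orbits therefore represent one and the same nontrivial free homotopy class $\alpha$, independently of $\epsilon$. This globally defined Hamiltonian, pulled back from a rectangle through the cutting map rather than supported in an annulus, is the ingredient your proposal is missing.
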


\begin{rem}
Humili$\acute{e}$re, Le Roux and Seyfaddini \cite{HLS} found the similar theorem to Theorem \ref{torus2} independently (See also \cite{Is}).
\end{rem}

\begin{proof}[Proof of Theorem \ref{torus2}]
By cutting $\Sigma_g$  open along $e^1_1\cup\cdots\cup e^1_{2g}$, we construct $4g$-gon $\tilde{\Sigma}_g$ and the natural quatient map $\pi\colon\tilde{\Sigma}_g\to\Sigma_g$.
We mark all sides of  $\tilde{\Sigma}_g$ by $e^u_1,\ldots,e^u_{2g}$ and $e^l_1,\ldots,e^l_{2g}$ such that $\pi(e^u_i)=e^1_i$ and $\pi(e^l_i)=e^1_i$.

Put $A=\int_{\Sigma_g}\omega$ and let $S_g$ be a square in $\mathbb{R}^2$ defined by $S_g=[0,1]\times[0,A]$.
Let $s^u$ and $s^l$ denote the sides $[0,1]\times\{A\}$ and $[0,1]\times\{0\}$ of $S_g$, respectively.
Then we can take  an area-preserving diffeomorphism $f\colon S_g\to\tilde\Sigma$ such that $f(s^u)=e^u_1$, $f(s^l)=e^l_1$ and $\pi(f((t,0)))=\pi(f((t,A)))$ for any $t\in[0,1]$.

Consider a moment map $\Phi\colon \Sigma_g\to\mathbb{R}$ such that $\Phi(x)=0$ if $x\in e^1_1\cup\cdots\cup e^1_{2g}$ and $\Phi(x)>0$ if $x\notin e^1_1\cup\cdots\cup e^1_{2g}$. Take a real number $\epsilon\neq 0$.
Then there exist a positive number $\delta$ and an open neighborhood $U$ of $\Phi^{-1}(\epsilon)$ such that $U\subset (\delta,1-\delta)\times(\delta,1-\delta)$.

Consider a function $\hat{H}\colon S_g\to\mathbb{R}$ such that  $\hat{H}((p,q))=Ap$ for any $p \in [\delta,1-\delta]$ and $\hat{H}((p,q))=0$ for any $p \in [0,\frac{\delta}{2}]\cup[1-\frac{\delta}{2},1]$.
Since $\pi(f((t,0)))=\pi(f((t,A)))$ for any $t\in[0,1]$ and $\hat{H}((p,q))=0$ for any $p \in [0,\frac{\delta}{2}]\cup[1-\frac{\delta}{2},1]$, there exists a Hamiltonian function $H\colon \Sigma_g\to\mathbb{R}$ such that  $\hat{H}=H\circ\pi\circ f$.

Define  the path $\hat\gamma\colon \mathbb{S}^1\to S_g$ by $\gamma(t)=(0,At)$ and the free loop $\gamma\colon \mathbb{S}^1\to \Sigma_g$ by $\gamma=\pi\circ f\circ\hat\gamma$. Let $\alpha\in [\mathbb{S}^1,\Sigma_g]$ be the homotopy class of free loops represented by $\gamma$.
Then $\alpha$, $U$ and $H$ satisfy the assumptions of Proposition \ref{spectral main theorem}, hence $U$ satisfies the bounded spectrum condition with respect to any idempotent $a\in QH_\ast(\Sigma_g,\omega)$ and is strongly $a$-null by Theorem \ref{bsc is scc}. Thus $M \cup L$ is an $a$-stem, hence it is $a$-superheavy.
\end{proof}

\end{document}